\newtheorem{thm}[equation]{Theorem}
\newtheorem{cor}[equation]{Corollary}
\newtheorem{lem}[equation]{Lemma}
\newtheorem{prop}[equation]{Proposition}
\newtheorem{fac}[equation]{Fact}
\theoremstyle{definition}
\newtheorem{rem}[equation]{Remark}
\newtheorem{exa}[equation]{Example}
\numberwithin{equation}{section}
\newcommand{\rd}{\mathds{R}^{d}}
\newcommand{\rn}{\mathds{R}^{n}}
\newcommand{\nd}{\mathds{N}^{d}}
\newcommand{\nn}{\mathds{N}^{n}}
\newcommand{\Op}{\operatorname{Op}}
\newcommand{\sgn}{\operatorname{sgn}}
\newcommand{\spann}{\operatorname{span}}
\newcommand{\Rez}{\operatorname{Re}}
\newcommand{\loc}{\operatorname{loc}}
\newcommand{\srn}{\mathcal{S}(\mathds{R}^{n})}
\newcommand{\srd}{\mathcal{S}(\mathds{R}^{d})}
\newcommand{\sco}{\mathcal{S}(\mathfrak{h})}
\newcommand{\scop}{\mathcal{S}'(\mathfrak{h})}
\newcommand{\smgh}{S(m,g;\mathfrak{h}^{*})}
\newcommand{\smgd}{S(m,g;\mathfrak{h})}
\newcommand{\smzgh}{S(m_{|z|},g;\mathfrak{h}^{*})}
\newcommand{\smzgd}{S(m_{|z|},g;\mathfrak{h})}
\newcommand{\smghp}{S(m',g;\mathfrak{h}^{*})}
\newcommand{\smghpp}{S(mm',g;\mathfrak{h}^{*})}
\newcommand{\smigh}{S(m^{-1},g;\mathfrak{h}^{*})}
\newcommand{\smigd}{S(m^{-1},g;\mathfrak{h})}
\newcommand{\smzigh}{S(m_{|z|}^{-1},g;\mathfrak{h}^{*})}
\newcommand{\smzigd}{S(m_{|z|}^{-1},g;\mathfrak{h})}
\newcommand{\sogh}{S(1,g;\mathfrak{h}^{*})}
\newcommand{\sogd}{S(1,g;\mathfrak{h})}
\newcommand{\hmgh}{H(m,g;\mathfrak{h})}
\newcommand{\lth}{L^{2}(\mathfrak{h})}
\newcommand{\ltrn}{L^{2}(\mathds{R}^{n})}
\begin{document}


\baselineskip=17pt



\title[Semigroups of measures on the Heisenberg group]{Symbolic calculus\\ and convolution semigroups of measures\\ on the Heisenberg group}
\author[K. Beka{\l}a]{Krystian Beka{\l}a}
\address{Institute of Mathematics
\\ University of Wroc{\l}aw
\\ 50-384 Wroc{\l}aw, Poland}
\email{krystian.bekala@math.uni.wroc.pl}

\date{}

\begin{abstract}
Let $P$ be a generalized laplacian on $\mathds{R}^{2n+1}$. It is known that $P$ is the generating functional of semigroups of measures $\mu_{t}$ on the Heisenberg group $\mathds{H}^{n}$ and $\nu_{t}$ on the Abelian group $\mathds{R}^{2n+1}$. Under some smoothness and growth conditions on the functional $P$ expressed in terms of its Abelian Fourier transform $\widehat{P}$
we show that the semigroup $\mu_{t}$ is a kind of perturbation of the semigroup $\nu_{t}$. More precisely, we give pointwise estimates for the difference of the densities of the measures $\mu_{t}$ and $\nu_{t}$.

As a consequence we get a description of the asymptotic behavior at the origin or pointwise estimates for the densities of the semigroup of measures on the Heisenberg group which is an analogue (via generating functional) of the symmetrized gamma (gamma-variance) semigroup on $\mathds{R}^{2n+1}$. 

The main tool is a symbolic calculus for convolution operators on the Heisenberg group.
\end{abstract}

\subjclass[2010]{Primary 22E25; Secondary 43A30}

\keywords{semigroups of measures, Heisenberg group, symbolic calculus, generalized laplacians, gamma-variance semigroup}

\maketitle

\section{Statement of the result} \label{s1}
We work on the Heisenberg group $\mathds{H}^{n}$ that is the manifold $\rn \times \rn \times \mathds{R}$ with the Campbell-Hausdorff multiplication
\begin{equation*}
xy=(x_{1},x_{2},x_{3})(y_{1},y_{2},y_{3})=(x_{1}+y_{1},x_{2}+y_{2},x_{3}+y_{3}+{1 \over 2}(x_{1}\cdot y_{2}-x_{2}\cdot y_{1})).
\end{equation*}
The Heisenberg group is a noncommutative two step nilpotent Lie group.
The convolution of functions (e.g. Schwartz functions)
\begin{equation} \label{splotc}
f*g(x)=\int_{\mathds{R}^{2n+1}} f(xy^{-1})g(y) dy = \int_{\mathds{R}^{2n+1}} f(y)g(y^{-1}x) dy,
\end{equation}
or Borel measures on $\mathds{H}^{n}$ is also noncommutative.

A family $(\mu_{t})_{t>0}$ of probabilistic measures on $\mathds{R}^{2n+1}$ is a \textit{convolution semigroup} of measures on the Heisenberg group if it satisfies
\begin{enumerate}[\upshape (i)]
\item $\mu_{t}*\mu_{s}=\mu_{t+s}, \qquad t,s >0$,
\item $\lim_{t \to 0} \langle \mu_{t}, f \rangle = f(e),  \qquad f \in C_{b}(\mathds{R}^{2n+1})$.
\end{enumerate}
In the similar way one can define a convolution semigroup of measures on the Abelian groups $\rd$ with respect to the ordinary convolution which we denote by $*_{0}$ or, more generally, on Lie groups with the induced convolution.
For such groups there is a rich theory of convolution semigroups of measures which essentially started in the article of Hunt \cite{Hun}.

If $\mu_{t}$ is a convolution semigroup of measures, then
\begin{equation*}
\langle P, f \rangle = \lim_{t \to 0} {\langle \mu_{t}, f \rangle - f(e) \over t}, \qquad f \in C_{c}^{\infty}(G),
\end{equation*}
defines a functional which is called the \textit{generating functional}.
Semigroups of measures are characterized by their generating functionals.
It is also known that generating functionals are exactly generalized laplacians. A distribution $P$ is called a \textit{generalized laplacian}, if it is real and satisfies the following maximum property
\begin{equation} \label{max}
\langle P, f \rangle \leq 0,
\end{equation}
for real functions $f \in C_{c}^{\infty}(G)$ such that $f(e)=\sup_{x \in G}|f(x)|$.
The Hunt theory also gives better continuity properties of generating functionals. In particular they are tempered distributions.

Let us notice that the definition of a generalized laplacian does not depend on the group structure for a given underlying manifold. Consequently, generalized laplacians on the Heisenberg $\mathds{H}^{n}$ are the same as on the Abelian group $\mathds{R}^{2n+1}$.

In our work we consider semigroups whose generating functionals satisfy some conditions of "admissibility". Such conditions are expressed in terms of their Fourier transforms, i.e. associated continuous negative definite functions. An example of an admissible generalized laplacian is
\begin{equation*}
\langle \Gamma, f \rangle = c_{n} \int_{\mathds{R}^{2n+1} \backslash \{0\}} {f(x)-f(0) \over \|x\|^{{2n+1 \over 2}}} K_{{2n+1 \over 2}}(\|x\|) dx, \qquad f \in C_{c}^{\infty}(\mathds{R}^{2n+1}),
\end{equation*}
where $K$ is the modified Bessel function of the second kind. The constant $c_{n}$ is given by $-\widehat{\Gamma}=\log(1+\|\xi\|^{2})$. In the setting of the Abelian group $\mathds{R}^{2n+1}$ the distribution $\Gamma$ is the generating functional of the so-called symmetrized gamma semigroup.
In this work we also present a quite large class of admissible generalized laplacians by using Bernstein functions. 

Let $P$ be the generating functional of a semigroup of measures $\mu_{t}$ on the Heisenberg group and also the generating functional of a semigroup of measures $\nu_{t}$ on the Abelian group $\mathds{R}^{2n+1}$. We prove estimates of the difference of the transforms $\widehat{\mu_{t}}-\widehat{\nu_{t}}$ (and their derivatives) and show that they are small with respect to the space variable $\xi \in \mathds{R}^{2n+1}$ and the time variable $t>0$.
This leads to our main theorem which says that the difference of the measures $\mu_{t}$ and $\nu_{t}$ agrees with a function $k_{t}$ which is smooth outside the origin and for all $t<1$, all $N \in \mathds{N}$ and all $\alpha \in \mathds{N}^{2n+1}$ satisfies
$$|\partial_{x}^{\alpha} k_{t}(x)| \leq \begin{cases}
c_{n,\alpha} t^{2}\|x\|^{-(2n+1)+2-|\alpha|} & \mbox{for } \|x\| \leq 1,\cr
c_{n,\alpha,N} t^{2}\|x\|^{-N} & \mbox{for } \|x\| \geq 1. \end{cases}$$

As an application we get some conditions for the measures $\mu_{t}$ to have densities in $L^{1}$ or in $L^{p}$.
We also apply the above estimates obtaining pointwise estimates for the densities of semigroups of measures on the Heisenberg group.
In particular, for the generalized laplacian $\Gamma$ we deduce the following estimates
\begin{equation*}
v_{t}(x) \leq c_{n} t\|x\|^{-(2n+1)+2t}, \qquad \|x\| \leq 1, \ t<1, 
\end{equation*}
or the asymptotic behavior for $t<1$,
\begin{align*}
v_{t}(x) &\asymp t\|x\|^{-(2n+1)+2t}, \qquad \|x\| \to 0.
\end{align*}

One of the most important tool using in the work is a symbolic calculus for convolution operators on the Heisenberg group which is similar to the Weyl-H{\"o}rmander symbolic calculus of pseudodifferential operators. The idea of the calculus consists in describing the product $a \# b = (a^{\vee} * b^{\vee})^{\wedge}$ for some classes of symbols.

Symbolic calculi for convolution operators and their application to investigate $L^{2}-$ boundedness of such operators were describe in Howe \cite{Hol3} and Melin \cite{Mel}.
A development of a symbolic calculus for invariant operators on homogeneous Lie groups and subtilizing conditions for $L^{2}$-boundedness in terms of symbols one can find in G{\l}owacki 
\cite{Glo1}, \cite{Glo5}. 
In the revised version of Taylor \cite{Tay} there are some operator calculi and the respond symbolic calculi as well as their applications.
Recently, symbolic calculi for (not necesserily invariant) pseudodifferential operators on the Heisenberg group appeared in Bahouri-Fermanian-Kammerer-Gallagher \cite{BKG} and 
Fischer-Ruzhansky \cite{FiRu}.

Symbolic calculi are often "tailor-made" to specified needs. In our case it was important to consider "small" weights, inverse symbols and symbols depending on parameters. None of existing calculi was fully satisfactory for our purposes and we present here a new one which is based on them.

Convolution semigroups of measures on the Heisnberg group have already been studied, also in the more general setting of nilpotent Lie groups.
Gaveau \cite{Gav} and Hulanicki \cite{Hul2} found independently the Fourier transform of the densities for the semigroup of measures generated by the sublaplacian on the Heisenberg group. In that context we also refer to e.g. Folland \cite{Vol} and  Bonfiglioli-Lanconelli-Uguzzoni \cite{blu}.
Barczy-Pap \cite{BaP} studied gaussian measures on the Heisenberg group. They found an explicit form of the (operator-valued) group Fourier transform of gaussian measures and gave conditions for convolution of gaussian measures to be again a gaussian measure.
L{\'e}vy processes on the Heisenberg group was investigated in Applebaum-Cohen \cite{ApC}. The authors described quantized infinitesimal generators and their analytical properties. In particular, they considered so-called phase-dominated processes. They also studied the relationship between processes and Dirichlet forms. 
Stable semigroups of measures on the Heisenberg group was studied by G{\l}owacki \cite{Glo5}.
He showed that such measures have densities in $L^{2}$. 
In setting of nilpotent Lie groups G{\l}owacki i Hebisch \cite{GH} proved pointwise estimates for the densities. It was generalized by Dziuba{\'n}ski \cite{Dzi} who found the asymptotic behavior at infinitity.

The general theory of convolution semigroups of measures on Lie groups we refer to Hunt \cite{Hun}, Hulanicki \cite{Hul}, Duflo \cite{Duf} and Faraut \cite{Far}.


\section{Preliminaries} \label{s2}
\subsection{Notion of Euclidean space}
We consider $d$-dimensional Euclidean space $\rd$ with the standard scalar product $x \cdot y = \sum_{i=1}^{d}x_{i}y_{i}$ and the Euclidean norm $\|x\|=(\sum_{i=1}^{d}x_{i}^{2})^{1 \over 2}$.
We identify $\rd$ and its dual. 

Let $T_{j}f(x)=x_{j}f(x)$. For a multiindex  $\alpha \in \nd$ we denote
\begin{equation*} 
T^{\alpha}f(x)=x_{1}^{\alpha_{1}} \ldots x_{d}^{\alpha_{d}}f(x), \qquad
\partial^{\alpha}f(x)=\partial^{\alpha_{1}}_{1}\ldots\partial^{\alpha_{d}}_{d}f(x).
\end{equation*}
Sometimes we will write $T^{\alpha}_{x}$ or $\partial^{\alpha}_{x}$ to emphasize that the variable $x$ is in use. For $\alpha \in \nd$ let $|\alpha|=\sum_{i=1}^{d} \alpha_{i}$.
Also let
\begin{equation*}
\widetilde{f}(x)=f(-x), \qquad \qquad \tau_{y}f(x)=f(y+x), \qquad y \in \rd.
\end{equation*}

We use subspaces of the space of continuous function $C(\rd)$: functions vanishing at infinity $C_{0}(\rd)$ and bounded functions $C_{b}(\rd)$, both with the supremum norm
$\|f\|_{\infty} = \sup_{x \in \rd} |f(x)|$.
The Schwartz space $\srd$ consists of smooth functions for which the following seminorms
\begin{equation*} 
\|f\|_{(N)} = \max_{|\alpha|+|\beta|\leq N} \|T^{\alpha}\partial^{\beta}f\|_{\infty}, \qquad N \in \mathds{N},
\end{equation*}
are finite. A dense subspace of the Schwartz class $\srd$ is the space of smooth functions with compact support $C_{c}^{\infty}(\rd)$. 

Let $\mathcal{S}'(\rd)$ be the space of tempered distributions, i.e. continuous linear functionals on $\srd$. More general, the classes of functions $C_{c}^{\infty}(\rd) \subset \srd \subset C^{\infty}(\rd)$ led to the classes of distributions $\mathcal{D}'(\rd) \supset \mathcal{S}'(\rd) \supset \mathcal{E}'(\rd)$. The class $\mathcal{E}'(\rd)$ denotes distributions with compact support.
The pairing of a distribution $A$ with a smooth function $f$ is denoted by $\langle A, f \rangle$, whenever it makes sense. Moreover, for $\alpha \in \nd$ we denote
\begin{equation*}
\langle T^{\alpha}A, f \rangle = \langle A, T^{\alpha}f \rangle, \qquad
\langle \partial^{\alpha}A, f \rangle = (-1)^{|\alpha|}\langle A, \partial^{\alpha}f \rangle, \qquad
\langle \widetilde{A}, f \rangle = \langle A, \widetilde{f} \rangle,
\end{equation*}
and $\langle \psi A, f \rangle = \langle A, \psi f \rangle$
for $\psi \in \srd$. The Dirac delta distribution is defined by $\langle \delta_{0}, f \rangle = f(0).$
 
For $f \in \srd$ its \textit{Abelian Fourier transform} $\widehat{f}$ is given by
\begin{equation} \label{teef}
\mathcal{F}f(\xi) = \widehat{f}(\xi)= \int_{\mathds{R}^{d}} e^{-ix \cdot \xi} f(x) dx.
\end{equation} 
The Fourier transform $\mathcal{F}$ is a bijection of the Schwartz class. The Inverse Fourier transform satisfies
\begin{equation*} 
\mathcal{F}^{-1}\widehat{f}(x) =  (2\pi)^{-d} \int_{\mathds{R}^{d}} e^{ix \cdot \xi} \widehat{f}(\xi) d\xi = f(x)
\end{equation*}
and thus
\begin{equation} \label{foin}
f(x)=(2\pi)^{-d} \int_{\rd} \int_{\rd} e^{i(x-y)\cdot\xi}f(y) dy d\xi \qquad f \in \mathcal{S}(\rd).
\end{equation}
We also have
\begin{equation*}
\mathcal{F}T^{\alpha} = (i\partial)^{\alpha}\mathcal{F}, \qquad T^{\alpha}\mathcal{F} = (-i)^{|\alpha|} \mathcal{F}\partial^{\alpha}.
\end{equation*}
The Fourier transform extends by duality to $\mathcal{S}'(\rd)$. 

We denote by $L^{p}(\rd)$ the usual integrable functions with  $p$-th power. We say that $f \in L^{p}_{\loc}(\rd)$, if for every compact $K \subseteq \rd$ we have
\begin{equation*}
\int_{K} |f(x)|^{p} dx < \infty. 
\end{equation*}
A distribution $A \in \mathcal{D}'(\rd)$ agrees with a function $a \in L^{1}_{\loc}(\rd)$, if
\begin{equation*}
\langle A, f \rangle = \int a(x)f(x) dx, \qquad f \in C_{c}^{\infty}(\rd).
\end{equation*}
 
The space $L^{2}(\rd)$ is a Hilbert space with the inner product
\begin{equation*}
\langle f, g\rangle_{L^{2}(\rd)} = \int_{\rd} f(x) \overline{g(x)} dx.
\end{equation*} 
The Fourier transform of Schwartz functions preserves $L^{2}(\rd)$ norm and extends to an isometry of  $L^{2}(\rd)$ which will be denoted in the same way. The identity
\begin{equation} \label{plancz}
\|f\|_{L^{2}(\rd)} = (2\pi)^{-d}\|\widehat{f}\|_{L^{2}(\rd)}, \qquad f \in L^{2}(\rd), 
\end{equation}
is called the \textit{Plancherel formula}.
Bounded operators on $L^{2}(\rd)$ are denoted by $\mathcal{B}(\ltrn)$.

The standard convolution (for e.g. Schwartz functions)
\begin{equation*} 
f*_{0}g(x)=\int_{\rd} f(x-y)g(y) dy = \int_{\rd} f(y)g(x-y) dy
\end{equation*}
is commutative and it also makes sense for functions in the appropriate spaces $L^{p}(\rd)$ and $L^{q}(\rd)$.
The Fourier transform of a convolution is given by
$\widehat{f*_{0}g}=\widehat{f}\widehat{g}$.
We also define the convolution between a tempered distribution $A$ and a Schwartz function
\begin{equation*}
A*_{0}f(x) = \langle A, \tau_{x}\widetilde{f} \rangle, \qquad f \in \sco.
\end{equation*}

The following proposition is a mere modification of the theorem in Stein \cite{Ste} (VI.4.4., Proposition 2a). With minor modification its proof work in our case.
\begin{prop} \label{st0thm}
Let $(m_{t})_{t \in T}$, $T \subseteq \mathds{R}$ be a family of locally integrable and smooth functions on $\mathds{R}^{d} \backslash\{0\}$ and $M>d$. If $(m_{t})_{t \in T}$ satisfy
\begin{equation*} 
|\partial^{\alpha}_{\xi}m_{t}(\xi)|\leq c_{\alpha,M} c_{t} \|\xi\|^{-|\alpha|-M}, \ \ \ \alpha \in \mathds{N}^{d},
\end{equation*}
then the inverse Fourier transforms $K_{t}=m_{t}^{\vee}$ are smooth functions outside the origin and satisfy
\begin{equation*} 
|\partial^{\alpha}_{x}K_{t}(x)|\leq c'_{\alpha,M}c_{t}\|x\|^{-d-|\alpha|+M}, \alpha \in \mathds{N}^{d}.
\end{equation*}
\end{prop}

We say that $\psi$ is a \textit{bump function} if it is $[0,1]$-valued, we have $\psi(x)=1$ in some neighborhood of zero and $\psi(x)=0$ for $\|x\| \geq 1$ and moreover $\|\psi\|_{L^{1}(\rd)}=1$. Then
\begin{equation*}
\psi_{k}(x) = k^{d}\psi(kx), \qquad k \in \nd,
\end{equation*}
is an approximate identity of the Schwartz class.


\subsection{Heisenberg group and its dual space}
We consider the vector space $\mathfrak{h}= \rn \times \rn \times \mathds{R}$ as a Lie algebra with the commutator
\begin{equation*}
[(x_{1},x_{2},x_{3}), (y_{1},y_{2},y_{3})] = (0,0,(x_{1}\cdot y_{2}-x_{2}\cdot y_{1})),
\end{equation*}
and as a nonabelian Lie group ($\mathds{R}^{2n+1}$ is the underlying manifold) with the Campbell-Hausdorff multiplication
\begin{equation*}
xy=(x_{1},x_{2},x_{3})(y_{1},y_{2},y_{3})=(x_{1}+y_{1},x_{2}+y_{2},x_{3}+y_{3}+{1 \over 2}(x_{1}\cdot y_{2}-x_{2}\cdot y_{1})).
\end{equation*}
Such a group will be called the Heisenberg group.
Thus, in that case we change the notion from the section \ref{s1}.

The neutral element is the origin and the inverse of $x$ is $x^{-1}=-x$. We denote left and right translations by
\begin{equation*}
l_{x}(y)=xy, \qquad r_{x}(y)=yx, \qquad x,y \in \mathfrak{h}.
\end{equation*}
The algebra $\mathfrak{h}$ can be identified with the Lie algebra of left-invariant vextor fields $X_{1},\ldots,X_{2n+1}$ on the Heisenberg group.

The Heisenberg group is endowed with a family $(\delta_{s})_{s>0}$ of one-parameter automorphisms given by $\delta_{s}(x_{1},x_{2},x_{3})=(sx_{1},sx_{2},s^{2}x_{3})$ which are called dilations. Thus $\mathfrak{h}= \mathfrak{h}^{1} \oplus \mathfrak{h}^{2} \cong \mathds{R}^{2n} \times \mathds{R}$, where 
\begin{equation*}
\mathfrak{h}^{1}=\{x \in \mathfrak{h}: \delta_{s}(x)=sx\}, \qquad\ 
\mathfrak{h}^{2}=\{x \in \mathfrak{h}: \delta_{s}(x)=s^{2}x\}.
\end{equation*}
In depend of our convenience we will write elements of $\mathfrak{h}$ in coordinates as elements of $\mathds{R}^{2n+1}$ or $\mathds{R}^{2n} \times \mathds{R}$ or $\rn \times \rn \times \mathds{R}$. Similarly, for a multiindex $\alpha \in \mathds{N}^{2n+1}$ we will use notion for operators $\partial^{\alpha}$ or $T^{\alpha}$ depending on the context, for example
\begin{equation*}
\partial_{x}^{\alpha}f(x) = \partial_{x_{1}}^{\alpha_{1}} \partial_{x_{2}}^{\alpha_{2}} \partial_{x_{3}}^{\alpha_{3}}f(x_{1},x_{2},x_{3}), \qquad (x_{1},x_{2},x_{3}) \in \rn \times \rn \times \mathds{R}, \ \alpha_{1}, \alpha_{2} \in \nn, \alpha_{3} \in \mathds{N}.
\end{equation*}

We define the \textit{homogeneous norm} on the Heisenberg group
\begin{equation*}
|(u,t)|=(\|u\|^{2}+|t|)^{1 \over 2}, \qquad (u,t) \in \mathds{R}^{2n} \times \mathds{R}.
\end{equation*}
It satisfies $|x^{-1}|=|x|$ and $|\delta_{s}(x)|=s|x|$ for $x \in \mathfrak{h}$ i $s>0$.
By $l(\alpha)$ we also denote the \textit{homogeneous length} of a multiindex $\alpha \in \mathds{N}^{2n+1}$, i.e.
\begin{equation*}
l(\alpha)=(\alpha_{1}+\ldots+\alpha_{2n})+2\alpha_{2n+1}.
\end{equation*}
The dual space to the vector space $\mathfrak{h}$ is denoted by $\mathfrak{h}^{*} \cong \mathds{R}^{2n+1}$.
Then $\mathfrak{h}^{*}=(\mathfrak{h}^{1})^{*}\oplus (\mathfrak{h}^{2})^{*}$ and the space $(\mathfrak{h}^{1})^{*}$ will be sometimes idenified with the Phase Space $W = \mathds{R}^{2n}$.

The Schwartz class $\sco$ on the Heisenberg group is just the space $\mathcal{S}(\mathds{R}^{2n+1})$.
In the similar way we define other function spaces, in particular $\lth=L^{2}(\mathds{R}^{2n+1})$, and distributional spaces (if they are independent from the group structure; in other way it will be emphasized). 

The Lebesgue measure on the vector space $\mathfrak{h}$ is a (noninvariant) Haar measure on the group $\mathfrak{h}$.
As in the Abelian case we denote $\widetilde{f}(x)=f(x^{-1})$, because $x^{-1}=-x$.
The convolution on the Heisenberg group was defined in (\ref{splotc}).
The convolution of a tempered distribution $A$ and a Schwartz function $f$ is given by
\begin{equation*}
A*f(x)=\langle A, r_{x}\widetilde{f} \rangle, \qquad f*A(x)= \langle A, l_{x}\widetilde{f} \rangle, \qquad x \in \mathfrak{h}.
\end{equation*}
If $X$ is a left-invariant vector field on the group $\mathfrak{h}$, then
\begin{equation*}
X(f*g)(x)= \int_{\mathfrak{h}} f(y)X(g(y^{-1}x)) dy = \int_{\mathfrak{h}} f(y)(Xg)(y^{-1}x) dy = f*Xg(x).
\end{equation*}
The vector field $X_{2n+1}=\partial_{2n+1}$ is both left- and right-invariant. In particular,
\begin{equation*}
\partial_{2n+1}(f*g)=\partial_{2n+1}f*g=f*\partial_{2n+1}g.
\end{equation*}
There is an \textit{approximate identity} $(\psi_{k})_{k \in \mathds{N}}$ of the Schwartz class on the Heisenberg group, i.e.
$\psi_{k}*f \to f$ in the Schwartz class. 

We also consider the vector space $\mathds{R}^{2n} \times \mathds{R}$ with the commutators $[(u,t), (u',t')]_{\theta} = (0,\theta\{u,u'\})$ for $\theta \in [0,1)$ obtaining the family of Lie algebras $\mathfrak{h}_{\theta}$ and simultaneously Lie groups with the Campbell-Hausdorff multiplications 
\begin{equation*}
(u,t) \circ_{\theta} (u',t') = (u+u',t+t'+{1 \over 2}\theta\{u,u'\}).
\end{equation*}
The group $\mathfrak{h}_{0}$ is then the Abelian group $\mathds{R}^{2n+1}$ with the addition. On the other hand each group $\mathfrak{h}_{\theta}$, $\theta \in (0,1)$ is isomorphic with the Heisenberg group $\mathfrak{h}$.
Neutral element of each group is zero and the inverse of $(u,t)$ is $(-u,-t)$.
Let $*_{\theta}$ denote the convolution on the group $\mathfrak{h}_{\theta}$, $\theta \in (0,1)$.


Stone - von Neumann theorem gives a complete description of irreducible unitary representations on the Heisenberg group. Namely, each representation is equivalent to one of the following representations: representations $\chi_{\xi+i\eta}(x,y,t)=e^{i(x \cdot \xi + y \cdot \eta)}$ acting on $\mathds{C}$ or Schr{\"o}dinger representations $\pi^{\lambda}$, $\lambda \neq 0$ acting on the Hilbert space $\ltrn$.
We use the following notion
\begin{equation*}
\lambda^{1 \over 2}:=\sgn(\lambda)|\lambda|^{1 \over 2}=
\begin{cases}
\lambda^{1 \over 2} & \mbox{for } \lambda>0,\cr
-|\lambda|^{1 \over 2} & \mbox{for } \lambda<0.
\end{cases}.
\end{equation*}
For $x \in \mathds{R}^{n}$ we also denote $\lambda^{1 \over 2}x=(\lambda^{1 \over 2}x_{1},\ldots,\lambda^{1 \over 2}x_{n})$ (compare with a notion $\lambda^{1 \over 2}w$, $w \in W=\mathds{R}^{2n}$ on the Phase Space in the section \ref{s3}).

\textit{Schr{\"o}dinger representations} of the Heisenberg group indexed by the parameter $\lambda \in \mathds{R} \backslash \{0\}$ are given by
\begin{equation*}
\pi^{\lambda}_{(u,t)}f(x) = e^{i\lambda t}e^{i \lambda^{1 \over 2}x \cdot u_{2}} e^{{1 \over 2}i\lambda u_{1} \cdot u_{2}} f(x+|\lambda|^{1 \over 2}u_{1}), \qquad (u,t) \in \mathfrak{h}, f \in \ltrn.
\end{equation*}
The conjugate operator satisfies $(\pi^{\lambda}_{(u,t)})^{*}=\pi^{\lambda}_{(-u,-t)}$.

Schr{\"o}dinger representations $\pi^{\lambda}$ act on the Hilbert space $\mathcal{H}_{\pi^{\lambda}}=L^{2}(\mathds{R}^{n})$. We consider the space of \textit{smooth vectors} $\mathcal{H}^{\infty}_{\pi^{\lambda}}$, i.e. functions $f \in L^{2}(\mathds{R}^{n})$ such that (vector-valued) function $\psi_{f}^{\lambda}(u,t) = \pi^{\lambda}_{(u,t)}f$ is smooth.
It is known that $\mathcal{H}^{\infty}_{\pi^{\lambda}} = \mathcal{S}(\mathds{R}^{n})$ (see e.g. Corwin-Greenleaf \cite{CoGr} Cor. 4.1.2).

Sometimes it is useful to consider the space of smooth vectors in a weak sense $\mathcal{H}^{\infty, w}_{\pi^{\lambda}}$, i.e. functions $f \in L^{2}(\mathds{R}^{n})$ such that function $\psi_{f,g}^{\lambda}(u,t) = \langle (\pi^{\lambda}_{(u,t)})^{*}f, g \rangle_{\ltrn}$ which is called \textit{matrix coefficient} is smooth on $\mathfrak{h}$ for all $g \in L^{2}(\mathds{R}^{n})$. One can show (see Taylor \cite{Tay}, p.30-31) that 
$\mathcal{H}^{\infty}_{\pi^{\lambda}}=\mathcal{H}^{\infty,w}_{\pi^{\lambda}}$.

Another subclass of $\ltrn$ will be useful. We denote by $\mathcal{H}^{s}_{\pi^{\lambda}}$ the G{\"a}rding space, i.e.
\begin{equation*}
\mathcal{H}^{s}_{\pi^{\lambda}} = \spann\{\pi^{\lambda}_{\varphi}f: \varphi \in \mathcal{S}(\mathfrak{h}), f \in L^{2}(\mathds{R}^{n}) \}.
\end{equation*}
It has been shown in Dixmier-Malliavin \cite{DiMa} that the space of smooth vectors and the G{\"a}rding space are identical i.e. $\mathcal{H}^{s}_{\pi^{\lambda}}=\mathcal{H}^{\infty}_{\pi^{\lambda}}$.
We conclude from the above considerations the following theorem. 
\begin{cor}
On the Heisenberg group we have
\begin{equation*}
\mathcal{H}^{\infty}_{\pi^{\lambda}} = \mathcal{H}^{\infty,w}_{\pi^{\lambda}} = \mathcal{H}^{s}_{\pi^{\lambda}} = \mathcal{S}(\rn).
\end{equation*}
\end{cor}


\subsection{$L^{2}$-theory}
Theory of harmonic analysis on the Heisenberg group on the Phase Space one can find in Folland \cite{Fol} and Thangavelu \cite{Tha}.

Let $F \in \sco$. We consider the (operator-valued) \textit{group Fourier transform}
\begin{equation*}
\pi^{\lambda}_{F} = \int_{\mathfrak{h}} F(u,t) (\pi^{\lambda}_{(u,t)})^{*} du dt, \qquad \lambda \neq 0.
\end{equation*}
It is determinated by
\begin{equation*}
\langle \pi^{\lambda}_{F}f,g \rangle_{\ltrn} = \int_{\mathfrak{h}} \psi_{f,g}^{\lambda}(u,t) F(u,t) du dt, \ \ \ \lambda \neq 0, \ f,g \in \ltrn,
\end{equation*}
i.e. by using matrix coefficients.
For $F \in L^{1}(\mathfrak{h})$ the operator $\pi^{\lambda}_{F}$ is also well-defined and 
\begin{equation*}
|\langle \pi^{\lambda}_{F}f,g \rangle_{\ltrn}| \leq \|F\|_{L^{1}(\mathfrak{h})}\|f\|_{\ltrn}\|g\|_{\ltrn}.
\end{equation*}
Moreover, if $F,G \in L^{1}(\mathfrak{h})$, then $\pi^{\lambda}_{F}\pi^{\lambda}_{G}=\pi^{\lambda}_{F*G}$. 

If $F$ is a Schwartz function, then $\pi^{\lambda}_{F}$ is a Hilbert-Schmidt operator.
Indeed, we have
\begin{align*}
\pi^{\lambda}_{F}f(x) &= \int_{\mathfrak{h}} F(u,t) (\pi^{\lambda}_{(u,t)})^{*}f(x) du dt
\\
&= \int \int e^{-i\lambda t}e^{-i\lambda^{1 \over 2}(x-{1 \over 2}|\lambda|^{1 \over 2}u_{1})\cdot u_{2}}f(x-|\lambda|^{1 \over 2}u_{1}) F(u_{1},u_{2},t) du dt
\\
&= \int e^{-i\lambda^{1 \over 2}(x-{1 \over 2}|\lambda|^{1 \over 2}u_{1})\cdot u_{2}}f(x-|\lambda|^{1 \over 2}u_{1}) \mathcal{F}_{3}F(u_{1},u_{2},\lambda) du
\\
&= |\lambda|^{-{n \over 2}} \int \int e^{-i{1 \over 2}(x+y)\cdot \lambda^{1 \over 2}u_{2}}f(y) \mathcal{F}_{3}F(|\lambda|^{-{1 \over 2}}(x-y),u_{2},\lambda) dy du_{2}
\\
&= |\lambda|^{-{n \over 2}} \int f(y) \mathcal{F}_{2;3}F(|\lambda|^{-{1 \over 2}}(x-y),{1 \over 2}\lambda^{1 \over 2}(x+y),\lambda) dy = \int f(y) K^{\lambda}_{F}(x,y)  dy
\end{align*}
and thus $\pi^{\lambda}_{F}$ is such an operator with the kernel
\begin{equation*} \label{hspdo}
K^{\lambda}_{F}(x,y) = |\lambda|^{-{n \over 2}} \mathcal{F}_{2;3}F(|\lambda|^{-{1 \over 2}}(x-y),{1 \over 2}\lambda^{1 \over 2}(x+y),\lambda), \qquad x,y \in \rn.
\end{equation*}
Here, $\mathcal{F}$ with index denotes the partial Abelian Fourier transform on $\rn \times \rn \times \mathds{R}$.

The following formula is an analogue of the ordinary Plancherel formula on $\ltrn$.
\begin{thm}
Let $F \in \sco$. Then 
\begin{equation*}
\|F\|_{\lth}^{2} = (2\pi)^{-(n+1)}\int \|\pi^{\lambda}_{F}\|_{HS}^{2} |\lambda|^{n} d\lambda. 
\end{equation*}
\end{thm}
\begin{proof}
Let $K_{F}^{\lambda}$ be the kernel of the operator $\pi^{\lambda}_{F}$. Then
\begin{equation*}
\|\pi^{\lambda}_{F}\|^{2}_{HS} = \int \int |K_{F}^{\lambda}(x,y)|^{2} dx dy = |\lambda|^{-n} \int \int |\mathcal{F}_{2;3}F(|\lambda|^{-{1 \over 2}}(x-y),{1 \over 2}\lambda^{1 \over 2}(x+y),\lambda)|^{2} dx dy
\end{equation*}
Changing variables and using the ordinary Plancherel formula (\ref{plancz}) we get that the above is equal to
\begin{equation} \label{gwia}
(2\pi)^{-n}|\lambda|^{-n} \int \int |\mathcal{F}_{3}F(u_{1},u_{2},\lambda)|^{2} du_{1} du_{2} dv.
\end{equation}
Multiplying by $|\lambda|^{n}$, integrating with respect to $\lambda$ and using again the Plancherel formula we get the thesis.
\end{proof}

\begin{cor}
The map $F \mapsto \pi^{\lambda}_{F}$ defined for $F \in \sco$ can be extended to an unitary map from $\lth$ to the space of Hilbert-Schmidt operators.
\end{cor}
As a consequence, a convolution operator $f \mapsto A*f$ can be realized by using the group Fourier transform as $\pi^{\lambda}_{f} \to \pi^{\lambda}_{A}\pi^{\lambda}_{f}$. More precisely, one can define the operator
$\pi_{A}^{\lambda}$ on functions of the form $\pi_{f}^{\lambda}g$, where $f\in  L^{2}(\mathfrak{h})$, $g\in L^{2}(\rn)$, by 
\begin{equation*}
\pi_{A}^{\lambda}\pi_{f}^{\lambda}g=\pi_{A*f}^{\lambda}g. 
\end{equation*}
It is known (see e.g. Taylor \cite{Tay}) that the $\lth$-norm of the convolution operator $\Op(A)f=A*f$ can be computed as the supermum of the $\ltrn$-norms of the operators $\pi^{\lambda}_{A}$.
\begin{prop} \label{wpt}
Let $\Op(A)$ be a convolution operator bounded on $\lth$. Then, the operators $\pi^{\lambda}_{A}$ are uniformly bounded on $\ltrn$ and
\begin{equation} \label{suplam}
\|\Op(A)\|_{\lth \to \lth} = \sup_{\lambda \neq 0} \|\pi^{\lambda}_{A}\|_{\ltrn \to \ltrn}.
\end{equation}
\end{prop}


\subsection{$\sco$-convolvers}
We say that a tempered distribution $A$ is an \textit{$\sco$-convolver} (or $\mathcal{S}$-convolver on $\mathfrak{h}$), if 
\begin{equation*}
A*f \in \sco, \ \ \ f*A \in \sco,  \qquad f \in \sco.
\end{equation*}
One can also consider distributions which are only left or right $\sco$-convolvers. 
The basic information about $\sco$-convolvers one can find in Corwin \cite{Cor}.

Let $A$ be an $\sco$-convolver (i.e. both right and left). One can consider convolution operators with such a distribution. For the sake of attention let us suppose that we consider a convolution with $A$ on the left, i.e.
\begin{equation*}
\Op(A)f(x) = A*f(x) = \langle A, l_{x}\widetilde{f} \rangle, \qquad f \in \sco.
\end{equation*}

Such an operator can be extended to the operator $\Op(A):\scop \to \scop$ by the prescription
\begin{equation*}
\langle \Op(A)u, f \rangle = \langle u, \Op(A)^{*}f \rangle = \langle u, \Op(\widetilde{A})f \rangle, \qquad u \in \scop, f \in \sco.
\end{equation*}
Thus one can consider $\sco$-convolvers on every subspace of $\scop$.

The class of $\sco$-convolvers is denoted by $\mathscr{C}$. The class $\mathscr{C}$ contains tempered distributions with compact support and Schwartz functions.
For every $\sco$-convolver $A$ there is a sequence $A_{n}$ of Schwartz functions such that $A_{n}*f \to A*f$ in $\sco$ for all $f \in \sco$. It is enough to get $A_{n}=A*\theta_{n}$, where $\theta_{n}$ is an approximate identity in $\sco$.
\begin{prop}
The class $\mathscr{C}$ is closed under addition, convolution, differentiation and multiplication by polynomial. 
\end{prop}

In the similar way one can define $\mathcal{S}$-convolvers on (connected, simply connected, nilpotent) Lie groups. In the Abelian case $\mathcal{S}(\mathds{R}^{d})$-convolvers are characterized by the Abelian Fourier transform. Namely, $A$ is an $\mathcal{S}(\mathds{R}^{d})$-convolver iff its Fourier transform $\widehat{A}$ is a smooth function whose derivatives of any order are bounded by polynomials.

One can extend the definition of the group Fourier transform to all $\sco$-convolvers.
The following proposition one can find in Corwin \cite{Cor} (Proposition 2.3).
\begin{prop}
Let $A$ be an $\sco$-convolver. One can define (perhaps unbounded) an operator $\pi^{\lambda}_{A}$ whose domain contain  G{\"a}rding class $\mathcal{H}^{s}_{\pi^{\lambda}}$ (i.e the Schwartz class $\mathcal{S}(\rn)$). Moreover,
\begin{equation} \label{defpia}
\pi^{\lambda}_{A}\pi^{\lambda}_{\varphi}f=\pi^{\lambda}_{A \star \varphi}f, \qquad \varphi \in \sco, \ f \in \lth,
\end{equation}
$\pi^{\lambda}_{A}$ is continuous on $\mathcal{H}^{s}_{\pi^{\lambda}}$ and closable.
\end{prop}

Let us remark that in the case that $A$ is a distribution with compact support, the standard definition of the operator $\pi^{\lambda}_{A}$ is a little different. Namely, if $A \in \mathcal{E}'(\mathfrak{h})$, then the operator $\pi^{\lambda}_{A}:\mathcal{H}^{\infty}_{\pi^{\lambda}} \to \ltrn$ is define in a weak sense
\begin{equation} \label{defzwa}
\langle \pi^{\lambda}_{A}f,g \rangle_{\ltrn} = \langle A, \varphi_{f,g}^{\lambda} \rangle,
\end{equation}
where $\varphi_{f,g}^{\lambda}$ is a matrix coefficient.
Such an approach one can find in e.g Taylor \cite{Tay}.

The definition (\ref{defpia}) agrees with the definition (\ref{defzwa}), i.e. the operator $\pi^{\lambda}_{A}$ defined by (\ref{defzwa}) satisfies  $\pi^{\lambda}_{A*\psi}=\pi^{\lambda}_{A}\pi^{\lambda}_{\psi}$ for $\psi \in \sco$. Indeed, we have
\begin{align*}
\langle \pi^{\lambda}_{A*\psi}f, g \rangle_{\ltrn}
= \langle A*\psi, \varphi_{f,g}^{\lambda} \rangle
= \langle A, \varphi_{f,g}^{\lambda}*\widetilde{\psi} \rangle
= \langle A, \varphi_{\pi^{\lambda}_{\psi}f,g}^{\lambda} \rangle 
=\langle \pi^{\lambda}_{A}\pi^{\lambda}_{\psi}f, g \rangle_{\ltrn}. 
\end{align*}
We use the fact that $\varphi_{f,g}^{\lambda}*\widetilde{\psi} = \varphi_{\pi^{\lambda}_{\psi}f,g}^{\lambda}$. Indeed,
\begin{align*}
\varphi_{f,g}^{\lambda}*\widetilde{\psi}(h)
&= \int \varphi_{f,g}^{\lambda}(h(h')^{-1}) \widetilde{\psi}(h') dh'
= \int \psi(h') \int \pi^{\lambda}_{hh'}f(x) g(x) dx dh'
\\
&= \int \pi^{\lambda}_{h} \int \psi(h')\pi^{\lambda}_{h'}f(x) dh' g(x) dx
= \langle \pi^{\lambda}_{h}\pi^{\lambda}_{\psi}f, g \rangle_{\ltrn}
= \varphi_{\pi^{\lambda}_{\psi}f,g}^{\lambda}(h). 
\end{align*}


\section{Symbolic calculus} \label{s3}
\subsection{Symbolic calculus for pseudodifferential operators}
By $W$ we denote the Phase Space $\mathds{R}^{n} \times \mathds{R}^{n}$ with the symplectic form $\sigma(u,w)=u_{1} \cdot w_{2} - u_{2} \cdot w_{1}$, where $w=(w_{1}, w_{2}), u=(u_{1}, u_{2})$, $u_{i}, w_{i} \in \mathds{R}^{n}$, $i \in \{1,2\}$.

Let $\mathbf{g}$ be a Riemannian metric, i.e. a family $(\mathbf{g}_{w})_{w \in W}$ of positive quadratic forms on $W$, depending smoothly on $W$.
The conjugate metric $\mathbf{g}^{\sigma}$ of $\mathbf{g}$ is given by
\begin{equation*}
\mathbf{g}^{\sigma}_{w}(u)=\sup_{v \neq 0} {\sigma(u,v)^{2} \over \mathbf{g}_{w}(v)}.
\end{equation*}
We also define its gain factor by
\begin{equation*}
\tau_{\mathbf{g}}(w) = \inf_{u \neq 0} {\mathbf{g}^{\sigma}_{w}(u) \over \mathbf{g}_{w}(u)}.
\end{equation*}

It is said that a Riemannian metric is an \textit{admissible} metric (sometimes it is called H{\"o}rmander metric) if it satisfies the following properties:
\begin{enumerate}[\upshape (i)]
\item Slowness:
\begin{equation*}
\exists C, \ \ \ \ \ \mathbf{g}_{w}(u-w) \leq C^{-1} \Rightarrow \left({\mathbf{g}_{w}(v) \over \mathbf{g}_{u}(v)}\right)^{\pm 1} \leq C, \ \ \ \ \  u, w \in W, v \neq 0.
\end{equation*}
\item Temperance:
\begin{equation*}
\exists C, N, \ \ \ \ \ \left({\mathbf{g}_{w}(v) \over \mathbf{g}_{u}(v)}\right)^{\pm 1} \leq C(1+ \mathbf{g}^{\sigma}_{w}(u-w))^{N}, \ \ \ \ \ u, w \in W, v \neq 0.
\end{equation*}
\item Uncertainty principle: $ \ \ \ \tau_{\mathbf{g}}(w) \geq 1 \ \ \ \ \ $ $w \in W$. 
\end{enumerate}

The constants $C, N$ appearing above and any constants depending only on them are called the \textit{structural constants} of metric $\mathbf{g}$.

It is said that a positive function $m$ on $W$ is a \textit{$\mathbf{g}$-weight} if there are structural constants $C, N$ satisfying
\begin{equation*}
\exists C, \ \ \ \ \ \mathbf{g}_{w}(u-w) \leq C^{-1} \Rightarrow {m(w) \over m(u)} \leq C, \ \ \ \ \  u, w \in W.
\end{equation*}
and
\begin{equation*}
\exists C, N, \ \ \ \ \ \left({m(w) \over m(u)}\right)^{\pm 1} \leq C(1+ \mathbf{g}^{\sigma}_{w}(u-w)), \ \ \ \ \ u, w \in W.
\end{equation*}

In particular let us consider \textit{diagonal metrics}, i.e. metrics of the form
\begin{equation} \label{diag}
\mathbf{g}_{w}(u) = {\|u\|^{2} \over g(w)^{2}},
\end{equation}
where $g$ is a function on the Phase Space $W$.

Let $\mathbf{g}$ be a metric and let $m$ be a $\mathbf{g}$-weight. We say that a smooth function on $W$ is in the class $S(m,\mathbf{g})$ if the following seminorms
\begin{equation*}
\|a\|_{k;S(m,\mathbf{g})}=\sup_{w,\mathbf{g}_{X}(t_{j}) \leq 1} |a^{(k)}(w)(t_{1},\ldots,t_{k})| m(w)^{-1} \prod_{1 \leq j \leq k} \mathbf{g}_{w}(t_{j})^{-{1 \over 2}},
\end{equation*}
are finite for every $k \geq 0$.
Here $a^{(k)}$ is a $k$-multilinear form.

In the case of diagonal metrics (\ref{diag}) the class $S(m,\mathbf{g})$ is simply defined. Indeed $a \in S(m,\mathbf{g})$ iff
\begin{equation*}
|\partial^{\alpha}a(w)| \leq c_{\alpha}m(w)g(w)^{-|\alpha|}, \qquad \alpha \in \mathds{N}^{2n}.
\end{equation*}

Every $a \in \mathcal{S}(W)$ defines a linear map $A: \mathcal{S}(\mathds{R}^{n}) \to \mathcal{S}(\mathds{R}^{n})$ given by
\begin{equation*}
Af(x) =  \Op^{W}(a)f(x) = (2\pi)^{-n} \int \int e^{i(x-y)\cdot\xi} a\left({x+y \over 2},\xi\right) f(y) dy d\xi.
\end{equation*}
The function $a$ is called the \textit{symbol} of the pseudodifferential operator $A$. For $f,g \in \srn$ we define the \textit{Wigner transform} $\Psi_{f,g} \in \mathcal{S}(W)$ by
\begin{equation} \label{twig}
\Psi_{f,g}(x,\xi) = (2\pi)^{-n} \int e^{iy \cdot \xi} f(x+\frac{1}{2}y)\overline{g(x-\frac{1}{2}y)} dy.
\end{equation}
The weak version of a pseudodifferential operator
\begin{align*}
\langle Af, g \rangle_{\ltrn} &=
(2\pi)^{-n}\int \int \int e^{i(x-y)\cdot\xi} a\left({x+y \over 2},\xi\right) f(y) \overline{g(x)} dy d\xi dx
\\
= &(2\pi)^{-n}\int \int a(x,\xi) \int e^{iy \cdot \xi} f(x+\frac{1}{2}y)\overline{g(x-\frac{1}{2}y)} dy dx d\xi
= \langle a, \Psi_{f,g} \rangle,
\end{align*}
make sense for any $a \in \mathcal{S}'(W)$ and defines a linear operator which maps continuously $\mathcal{S}(\mathds{R}^{n})$ into $\mathcal{S}'(\mathds{R}^{n})$ (with better continuity properties in many cases). In particular, the identity operator satisfies  $I=\Op^{W}(1)$ which also follows from (\ref{foin}).


By $\Op^{W} S(m,\mathbf{g})$ we denote the class of operators whose symbols belong to $S(m,\mathbf{g})$. 
Let us also denote by $\#_{W}$ the composition of symbols which corresponds to the composition of operators, i.e.
\begin{equation*}
\Op^{W}(a \#_{W} b) = \Op^{W}(a)\Op^{W}(b).
\end{equation*}
The following properties are classical (see Lerner \cite{Ler}).
\begin{thm} \label{bigtm}
Suppose that $\mathbf{g}$ is an admissible metric and $m$, $m'$ are $\mathbf{g}$-weights.
\begin{enumerate}[\upshape (a)]
\item If $a \in S(m,\mathbf{g})$ and $b \in S(m',\mathbf{g})$, then for $k \in \mathds{N}$ there exist $k_{1}, k_{2} \in \mathds{N}$ such that
\begin{equation*}
\|a \#_{W} b\|_{k;S(mm',\mathbf{g})} \leq c_{k} \|a \|_{k_{1};S(m,\mathbf{g})} \|b\|_{k_{2};S(m',\mathbf{g})}.
\end{equation*}
In particular $S(m,\mathbf{g}) \#_{W} S(m',\mathbf{g}) \subseteq S(mm',\mathbf{g})$.
\item Operators in $\Op^{W} S(m,\mathbf{g})$ are bounded on $L^{2}(\mathds{R}^{n})$ for $m=1$. Moreover, there exist $k \in \mathds{N}$, $C>0$ such that
\begin{equation*}
\|\Op^{W}(a)\|_{L^{2} \to L^{2}} \leq C\|a \|_{k;S(1,\mathbf{g})}.
\end{equation*}
\end{enumerate}
\end{thm}

Let us recall the celeberated Beals \cite{Be2} characterization.
\begin{thm} \label{bealsthm}
The symbol $a=a(x,\xi)$ of a pseudodifferential operator $A=\Op^{W}(a)$ belongs to the class $S(1,\mathbf{1})$ if and only if $A$ and its iterated commutators with derivatives $\partial_{\xi_{i}}$ and multiplications by $x_{j}$ are bounded on $\ltrn$. Moreover, for every $k \in \mathds{N}$, there exists $j \in \mathds{N}$ such that
\begin{equation*}
\|a\|_{k;S(1,\mathbf{1})} \leq c_{k} \max_{|\alpha| \leq j}\|\Op^{W}(\partial^{\alpha}a)\|_{\ltrn \to \ltrn}.
\end{equation*}
\end{thm}


\subsection{Weights and weight functions}
We say that a function $g$ is a \textit{weight function on $\mathfrak{h}^{*}$}, if it satisfies
\begin{enumerate}[\upshape (i)]
\item $\exists C>0 \qquad \|u-w\| \leq C^{-1}g(w,\lambda) \Rightarrow \left({g(u,\lambda) \over g(w,\lambda)}\right)^{\pm 1} \leq C, \qquad  u, w \in W, \ \lambda \neq 0,$
\item $\exists C>0, N \in \mathds{N} \qquad \left({g(u,\lambda) \over g(w,\lambda)}\right)^{\pm 1} \leq C(1+ |\lambda|^{-1} g(w,\lambda)\|u-w\|)^{N}, \qquad u, w \in W, \ \lambda \neq 0$,
\item $ g(w,\lambda)^{2} \geq |\lambda|, \qquad w \in W$. 
\end{enumerate}
The above conditions are called slowness, temperance and uncertainty principle.
We say that a function $m$ is a \textit{weight for a weight function} $g$ on $\mathfrak{h}^{*}$, if
\begin{enumerate}[\upshape (i)]
\item $\exists C>0 \qquad  \|u-w\| \leq C^{-1}g(w,\lambda) \Rightarrow \left({m(w,\lambda) \over m(u,\lambda)}\right)^{\pm 1} \leq C, \qquad  u, w \in W, \lambda \neq 0,$
\item $\exists C>0, N \in \mathds{N} \qquad \left({m(w,\lambda) \over m(u,\lambda)}\right)^{\pm 1} \leq C(1+ |\lambda|^{-1} g(w,\lambda)\|u-w\|)^{N}, \qquad u, w \in W, \lambda \neq 0$.
\end{enumerate}
Such conditions will be also called slowness and temperance (with respect to weight function $g$).

The following corollary follows directly from the definitions of weight function and weight.
\begin{rem}
A function $g$ is a weight function on $\mathfrak{h}^{*}$ iff metrics $\mathbf{g}_{\lambda}$ on $W$ indexed by the parameter $\lambda \in \mathds{R} \backslash \{0\}$ given by
\begin{equation} \label{gmet}
(\mathbf{g}_{\lambda})_{w}(u)={|\lambda|\|u\|^{2} \over g_{(\lambda)}(w)^{2}},
\end{equation}
are H{\"o}rmander metrics and structural constants (of metrics $\mathbf{g}_{\lambda}$) are independent from  $\lambda$.
Moreover, a function $m$ is a weight for $g$ on $\mathfrak{h}^{*}$ iff the functions $m_{(\lambda)}$ are weights for the metrics $\mathbf{g}_{\lambda}$ on $W$ (then the structural constants of weights are also independent from $\lambda$).
\end{rem}

\begin{exa}
The function
\begin{equation*}
\rho(w,\lambda) = (1+\|w\|^{2}+|\lambda|^{2})^{{1 \over 2}}.
\end{equation*}
is a weight function on $\mathfrak{h}^{*}$.
\end{exa}
\begin{proof}
The uncertainty principle 
\begin{equation*}
|\lambda| \leq 1+\|w\|^{2}+|\lambda|^{2}
\end{equation*}
is satisfied.

We show that there is a constant $C>0$ such that
\begin{equation} \label{duzece}
\|w-u\|^{2}  \leq C^{-2}(1+\|w\|^{2}+|\lambda|^{2}) \qquad \Rightarrow \qquad
\left({1+\|u\|^{2}+|\lambda|^{2} \over 1+\|w\|^{2}+|\lambda|^{2}}\right)^{\pm 1} \leq C^{2},
\end{equation}
i.e. the slowness condition is satisfied.
If $\|u\| \leq \|w\|$, then $1+\|u\|^{2}+|\lambda|^{2} \leq 1+\|w\|^{2}+|\lambda|^{2}$ and thus $C \geq 1$. On the other side,
\begin{align*}
1+\|w\|^{2}+|\lambda|^{2} &\leq 2(1+\|u\|^{2}+\|u-w\|^{2}+|\lambda|^{2}) 
\\
&\leq  2(1+\|u\|^{2}+|\lambda|^{2})+ 2C^{-2}(1+\|w\|^{2}+|\lambda|^{2})
\end{align*}
and then $1+\|w\|^{2}+|\lambda|^{2} \leq {2 \over 1-2C^{-2}}(1+\|u\|^{2}+|\lambda|^{2})$. It is enough to $C \geq \sqrt{2}$. The case $\|w\| \leq \|u\|$ is similar.

We show that $\rho$ is tempered, i.e.
\begin{equation} \label{perte}
\left({1+\|u\|^{2}+|\lambda|^{2} \over 1+\|w\|^{2}+|\lambda|^{2}}\right)^{\pm 1} \leq C^{2}\left(1+|\lambda|^{-1}(1+\|w\|^{2}+|\lambda|^{2})\|w-u\|^{2}  \right).
\end{equation}
As $\|u\|^{2} \leq 2(\|w\|^{2}+\|u-w\|^{2})$, we get
\begin{equation*}
{1+\|u\|^{2}+|\lambda|^{2} \over 1+\|w\|^{2}+|\lambda|^{2}} 
\leq 2{\|u-w\|^{2}+1+\|w\|^{2}+|\lambda|^{2} \over 1+\|w\|^{2}+|\lambda|^{2}}
\leq 2\left(1+{\|u-w\|^{2} \over 1+\|w\|^{2}+|\lambda|^{2}}\right) 
\end{equation*}
and thesis follows from the uncertainty principle.

The second case is similar.
\end{proof}

The similar example (formulated in terms of metrics) one can find in Fischer-Ruzhansky \cite{FiRu} and Bahouri-Fermian-Kammerer-Gallagher \cite{BKG}. They considered the metrics ${|\lambda|\|u\|^{2} \over 1+|\lambda|(1+\|w\|^{2})}$.

\begin{exa} \label{p54}
The function
\begin{equation*}
l(w,\lambda) = 1+\log(1+\|w\|^{2}+|\lambda|^{2}).
\end{equation*}
is a weight for the weight function $\rho$.
\end{exa}
\begin{proof}
We assume that
\begin{equation*}
\|u-w\|^{2} \leq C^{-2} (1+\|w\|^{2}+|\lambda|^{2})
\end{equation*}
for some (large enough) $C$ appearing in (\ref{duzece}). Then
\begin{equation*}
1+\|u\|^{2}+|\lambda|^{2} \leq 1+2(\|w\|^{2}+\|u-w\|^{2})+|\lambda|^{2} \leq (2+2C^{-2})(1+\|w\|^{2}+|\lambda|^{2})
\end{equation*}
and we get slowness
\begin{align*}
{1+\log(1+\|u\|^{2}+|\lambda|^{2}) \over 1+\log(1+\|w\|^{2}+|\lambda|^{2})}
=1+{\log\left({1+\|u\|^{2}+|\lambda|^{2} \over 1+\|w\|^{2}+|\lambda|^{2}}\right) \over 1+\log(1+\|u\|^{2}+|\lambda|^{2})} \leq 1+\log(2+2C^{-2}) \leq C^{2}.
\end{align*}
The second case is similar.

One the other hand, using (\ref{perte}) we get
\begin{align*}
{1+\log(1+\|u\|^{2}+|\lambda|^{2}) \over 1+\log(1+\|w\|^{2}+|\lambda|^{2})}
&=1+{\log\left({1+\|u\|^{2}+|\lambda|^{2} \over 1+\|w\|^{2}+|\lambda|^{2}}\right) \over 1+\log(1+\|u\|^{2}+|\lambda|^{2})}
\\
&\leq 1+\log \left(C^{2}(|\lambda|^{-1}(1+\|w\|^{2}+|\lambda|^{2})\|u-w\|^{2})\right)
\\
&\leq
C^{2}\left(1+|\lambda|^{-1}(1+\|w\|^{2}+|\lambda|^{2})\|u-w\|^{2}\right).
\end{align*}
\end{proof}
It is not hard to see that $g^{M}$ is a weight for the weight function $g$ on $\mathfrak{h}^{*}$ for $M \in \mathds{R}$.


We say that a tempered distribution $A$ is in the class $\smgd$ if its Abelian Fourier transform $\widehat{A}$ is in the class $\smgh$ of smooth functions $a$ on $\mathfrak{h}^{*}$ for which the following seminorms
\begin{equation*}
\|a\|_{k;\smgh} = \max_{|\alpha|+\beta \leq k} \sup_{(w,\lambda) \in \mathfrak{h}^{*}} |\partial_{w}^{\alpha}\partial^{\beta}_{\lambda} a(w,\lambda)| m(w,\lambda)^{-1} g(w,\lambda)^{|\alpha|+\beta}. 
\end{equation*}
are finite. We also use the following distributional seminorms
\begin{equation*}
\|A\|_{k;\smgd}=\|\widehat{A}\|_{k;\smgh}
\end{equation*}
which define the class $\smgd$.


The classes $S(m,g;\mathfrak{h}^{*})$ can be characterized by using "quantizied" symbols. This will be crucial for the symbolic calculus on the Heisenberg group.
Hereunder we write $\mathfrak{h}=\mathds{R}^{2n} \times \mathds{R}$. In particular, for $\alpha \in \mathds{N}^{2n+1}$ we have $\partial^{\alpha}=\partial_{1}^{\alpha_{1}}\partial_{2}^{\alpha_{2}}$, $\alpha_{1} \in \mathds{N}^{2n}$, $\alpha_{2} \in \mathds{N}$.
\begin{prop} \label{chse}
For all $k \in \mathds{N}$ we have
\begin{equation*}
\|a\|_{k;S(m,g;\mathfrak{h}^{*})} = \max_{k_{1}+k_{2} \leq k} \sup_{\lambda \neq 0} \|(\partial^{k_{2}}_{2}a)_{(\lambda)}\|_{k_{1};S(m_{(\lambda)}g_{(\lambda)}^{-k_{2}},\mathbf{g}_{\lambda})}.
\end{equation*}
In other words, a smooth function $a$ is in the class $S(m,g;\mathfrak{h}^{*})$ iff for all $l \in \mathds{N}$, the functions $(\partial^{l}_{2}a)_{(\lambda)}$ are in the classes $S(m_{(\lambda)}g_{(\lambda)}^{-l},\mathbf{g}_{\lambda})$ with the seminorms independent from $\lambda$.
\end{prop}
\begin{proof}
Let us notice, what follows from the definition of seminorms $\|\cdot\|_{k;S(m,g;\mathfrak{h}^{*})}$ that
\begin{equation*}
\|a\|_{k;S(m,g;\mathfrak{h}^{*})}=\max_{k_{1}+k_{2} \leq k} \|\partial_{2}^{k_{2}}a\|_{k_{1};S_{0}(mg^{-k_{2}},g;\mathfrak{h}^{*})}, \ \ \ k \in \mathds{N}.
\end{equation*}
Thus, it is enough to show
\begin{equation*}
\|a\|_{l;S_{0}(m,g;\mathfrak{h}^{*})}= \sup_{\lambda \neq 0} \|a_{(\lambda)}\|_{l;S(m_{(\lambda)},\mathbf{g}_{\lambda})},
\end{equation*}
which follows from the form of (\ref{gmet}) and the following equity $|\partial^{\alpha}_{w}a_{(\lambda)}|=|\lambda|^{|\alpha| \over 2}|(\partial_{1}^{\alpha}a)_{(\lambda)}|$.
\end{proof}


\begin{fac} \label{f54}
If $A \in S(m,g;\mathfrak{h})$, then $A$ is an $\sco$-convolver.
\end{fac}
\begin{proof}
We show that $A$ can be decompose as a sum of a distribution with compact support and a Schwartz function. Let $\alpha \in \mathds{N}^{2n+1}$. For some $K=K(\alpha)$ we have then
\begin{equation*}
\Delta^{N}T^{\alpha}\widehat{A} \in \lth, \qquad N \geq K
\end{equation*}
and thus
\begin{equation*}
\|\cdot\|^{2N}\partial^{\alpha}A \in \lth, \qquad N \geq K.
\end{equation*}
In particular,
\begin{equation*}
|\partial^{\alpha}A(x)| \leq c_{\alpha,k} \|x\|^{-k}, \ \ \ \|x\| \geq 1, k \in \mathds{N}
\end{equation*}
and then $(1-\varphi)A$ is a Schwartz function, where $\varphi$ is a bump function.
\end{proof}


\subsection{Further properties of $\sco$-convolvers}
The class of $\sco$-convolvers is closed under convolution and this allows us to define an operation 
\begin{equation*}
a \# b = (a^{\vee}*b^{\vee})^{\wedge}
\end{equation*} 
which is called twisted product for such $a, b \in \scop$ that $a^{\vee}$, $b^{\vee}$ are $\sco$-convolvers.

Let $w \in W \cong (\mathfrak{h}^{1})^{*}$. We denote $\lambda^{1 \over 2}w=(|\lambda|^{1 \over 2}w_{2},\lambda^{1 \over 2}w_{1})$. Recall that $\lambda^{1 \over 2}x$ for $x \in \rn$ was defined before (and consequently we have different definitions $\lambda^{1 \over 2}w$ for $w \in \mathds{R}^{2n}$ and $\lambda^{1 \over 2}x$ for $x \in \rn$).
For a function $a$ on $\mathfrak{h}^{*}$, let $a_{(\lambda)}$ be a function on $(\mathfrak{h}^{1})^{*}$ given by $a_{(\lambda)}(w)=a(\lambda^{1 \over 2}w,\lambda)$.

\begin{prop}
$\pi^{\lambda}_{F}$ is a pseudodifferential operator of Weyl-H{\"o}rmander
\begin{equation*}
\pi^{\lambda}_{F} = \Op^{W}(a_{\lambda})
\end{equation*}
with the symbol $a_{\lambda} = \widehat{F}_{(\lambda)}$ depending on the parameter $\lambda \in \mathds{R} \backslash \{0\}$.
\end{prop} 
\begin{proof}
We checked in (\ref{hspdo}) that
\begin{equation*}
\pi^{\lambda}_{F}f(x) = \int K_{F}^{\lambda}(x,y) f(y) dy
= (2\pi)^{n+1 \over 2}|\lambda|^{-{n \over 2}} \int f(x) \mathcal{F}_{2;3}F(|\lambda|^{-{1 \over 2}}(x-y),{1 \over 2}\lambda^{1 \over 2}(x+y),\lambda) dy.
\end{equation*}
Using the inverse Abelian Fourier transform we get that the above is equal to
\begin{equation*}
(2\pi)^{-n} \int \int e^{i(x-y) \cdot \xi} f(y) \widehat{F}(|\lambda|^{1 \over 2}\xi,{1 \over 2}\lambda^{1 \over 2}(x+y),t) dy d\xi
= \Op^{W}(\widehat{F}_{(\lambda)})f(x).
\end{equation*}
\end{proof}

\begin{lem} \label{szele}
Let $A$ be an $\sco$-convolver. Then,
$\pi^{\lambda}_{A}=\Op^{W}(\widehat{A}_{(\lambda)})$.
\end{lem}
\begin{proof}
If $A$ is an $\sco$-convolver, then, in particular, it is a tempered distribution. Putting
\begin{equation*}
\langle \widehat{A}_{(\lambda)}, f \rangle = \langle \widehat{A}((\cdot),\lambda), |\lambda|^{-{1 \over 2}}f(\lambda^{1 \over 2}(\cdot)) \rangle, \qquad \lambda \neq 0, \ f \in \mathcal{S}(W),
\end{equation*}
we get $\widehat{A}_{(\lambda)} \in \mathcal{S}'(W)$. If $\theta_{k}$ is an approximate identity, then  $(A*\theta_{k})^{\wedge}_{(\lambda)} \to \widehat{A}_{(\lambda)}$ in $\mathcal{S}'(W)$.

Thus the pseudodifferential operator $\Op^{W}(\widehat{A}_{(\lambda)})$ can be defined by using Wigner transformation $\Psi_{f,g}$ and we have covengence
\begin{align*}
\langle \Op^{W}(\widehat{A}_{(\lambda)})f,g \rangle_{\ltrn} &= \langle \widehat{A}_{(\lambda)}, \Psi_{f,g} \rangle = \lim \langle (A*\theta_{k})^{\wedge}_{(\lambda)}, \Psi_{f,g} \rangle 
\\
&=\lim \langle \Op^{W}((A*\theta_{k})^{\wedge}_{(\lambda)})f,g \rangle_{\ltrn},  \qquad f,g \in \sco.
\end{align*}
On the other hand, $A*\theta_{k} \in \sco$ and we have strong convergence
\begin{equation*}
\lim \Op^{W}((A*\theta_{k})^{\wedge}_{(\lambda)}) = \lim \pi^{\lambda}_{A*\theta_{k}}
= \pi^{\lambda}_{A}.
\end{equation*}
The above equities give the thesis.
\end{proof}

By the above lemma and Fact \ref{f54},
\begin{equation*}
\pi^{\lambda}_{A} = \Op^{W}(\widehat{A}_{(\lambda)}), \qquad A \in \smgd.
\end{equation*}
Moreover, $(A*\theta_{n})^{\wedge}_{(\lambda)} \in S(m_{(\lambda)},\mathbf{g}_{\lambda})$ uniformly and we have the strong convergence $\pi^{\lambda}_{A*\theta_{n}} \to \pi^{\lambda}_{A}$.

It follows directly form the definition that
\begin{equation*}
\pi^{\lambda}_{A}\pi^{\lambda}_{B}=\pi^{\lambda}_{A*B}, \qquad \lambda \neq 0,
\end{equation*}
for $\sco$-convolvers $A, B$. Comparing the symbols of the above pseudodifferential operators we get the following formula
\begin{equation} \label{lorbi}
\widehat{A}_{(\lambda)} \#_{W} \widehat{B}_{(\lambda)} = (\widehat{A} \# \widehat{B})_{(\lambda)}.
\end{equation}
Notice that $\pi^{\lambda}_{\delta_{0}}f=f$, i.e. "representation" of Dirac delta is the identity operator.

Let us use the following notion
\begin{align*}
[T_{1},T_{2}]_{*}(f,g)=\sum_{j=1}^{n} (T_{1,j}f*T_{2,j}g-T_{2,j}f*T_{1,j}g),
\\
[\partial_{1},\partial_{2}]_{\#}(a,b)=\sum_{j=1}^{n} (\partial_{1,j}a \# \partial_{2,j}b-T_{2,j}a \# \partial_{1,j}b),
\end{align*}
changing alternatively the index of convolution or twisted product. In particular, the lack of notion of twisted product refers to the ordinary product, i.e.
\begin{equation*}
[\partial_{1},\partial_{2}](a,b)(x,\xi)= \sum_{j=1}^{n} (\partial_{1,j}a\partial_{2,{j}}b-\partial_{2,j}a \partial_{1,j}b).
\end{equation*}

\begin{prop} \label{proz}
Let $A, B \in \mathscr{C}$ and let one of them be the sum of a distribution with compact support and a Schwartz function. Then, for every $N>0$ we have the following decomposition in $\mathscr{C}$
\begin{equation} \label{dcmp}
A*B=
\sum_{0 \leq j<N} {1 \over j!}(-{1 \over 2}\partial_{3}[T_{1},T_{2}]_{*_{0}})^{j}(A,B) + R_{N}(A,B),
\end{equation}
where
\begin{equation*}
R_{N}(A,B) = \int_{0}^{1} (1-\theta)^{N-1} {1 \over (N-1)!}(-{1 \over 2}\partial_{3}[T_{1},T_{2}]_{*_{\theta}})^{N}(A,B) \ d\theta.
\end{equation*}
Let $a=\widehat{A}$, $b=\widehat{B}$. Equivalently, by the Abelian Fourier transform we get
\begin{equation*} 
a \# b=
\sum_{0 \leq j<N} {1 \over j!}({i \over 2}T_{3}[\partial_{1},\partial_{2}])^{j}(a,b) + R_{N}(a,b),
\end{equation*}
where
\begin{equation} \label{reha}
R_{N}(a,b) = \int_{0}^{1} (1-\theta)^{N-1} {1 \over (N-1)!}({i \over 2}T_{3}[\partial_{1},\partial_{2}]_{\#_{\theta}})^{N}(a,b) \ d\theta.
\end{equation}
\end{prop}
\begin{proof}
\textit(sketch) At first let $A$, $B$ be Schwartz functions. If we write the Taylor series for $A$ we get the thesis. Similarly is in the case when $A$ is a Schwartz function, and $B$ is an $\sco$-convolver. In general case we show equity for $N=1$, other cases are similar. Let $A, B \in \mathscr{C}$. We get
\begin{equation*}
\langle A*B, f \rangle = \langle A, f*\widetilde{B} \rangle = \langle A, f*_{0}\widetilde{B} -{1 \over 2} \sum_{j=1}^{n} \int_{0}^{1} \partial_{3}(T_{1,j}f*_{\theta}T_{2,j}\widetilde{B}-T_{2,j}f*_{\theta}T_{1,j}\widetilde{B}) d\theta \rangle
\end{equation*}
Using Leibniz's rule we get that the above is equal to
\begin{align*}
\langle A*_{0}B, f\rangle +{1 \over 2} \sum_{j=1}^{n} \int_{0}^{1} \langle \partial_{3}A, (T_{1,j}(f*_{\theta}T_{2,j}\widetilde{B})-T_{2,j}(f*_{\theta}T_{1,j}\widetilde{B})) \rangle d\theta
\\
= \langle A*_{0}B -{1 \over 2} \sum_{j=1}^{n} \int_{0}^{1} \partial_{3}(T_{1,j}A*_{\theta}T_{2,j}B - T_{2,j}A*_{\theta}T_{1,j}B) d\theta, f \rangle.
\end{align*}
The changing of integral is justified, because one of distributions is the sum of a distribution with compact support and a Schwartz function.
\end{proof}

For $j<N$ let $S_{j}(a,b)$ denotes $j$-th element in the sum appearing in (\ref{dcmp}), i.e.
\begin{equation*}
S_{j}(a,b) = {1 \over j!}({i \over 2}T_{3}[\partial_{1},\partial_{2}])^{j}(a,b).
\end{equation*}
It is easy to see that $S_{0}(a,b)=ab$. We will also use an explicit form of $S_{1}(a,b)$
\begin{equation*}
S_{1}(a,b)(w,\lambda) = {i\lambda \over 2} \sum_{j=1}^{n} (\partial_{1,j}a\partial_{2,j}b-\partial_{2,j}a\partial_{1,j}b)(w,\lambda).
\end{equation*}


The following formula from G{\l}owacki \cite{Glo6} is an analogue of the Leibniz rule.
\begin{prop}
Let $A, B \in \mathscr{C}$. Then
\begin{equation*}
T^{\alpha}(A*B)= \sum_{l(\beta)+l(\gamma)=l(\alpha)} c_{\beta,\gamma} T^{\beta}A*T^{\gamma}B.
\end{equation*}
Let $a=\widehat{A}$ and $b=\widehat{B}$. Equivalently, by the Fourier transform we get
\begin{equation} \label{lzs}
(i\partial^{\alpha})(a\#b)
=\sum_{l(\beta)+l(\gamma)=l(\alpha)} c_{\beta,\gamma} (i\partial^{\beta})a \# (i\partial^{\gamma})b.
\end{equation}
\end{prop}
The above formula is true for any induced convolution on homogeneous Lie groups, in particular for the convolution $*_{\theta}$, $\theta \in (0,1)$. As one can see in the author's work \cite{KaBe} the constants $c_{\beta,\gamma}(\theta)$ are uniformly bounded with respect to $\theta \in (0,1)$.


\subsection{Composition theorem}
The twisted product $a \# b = (a^{\vee}*b^{\vee})^{\wedge}$ defined for tempered distributions $a$, $b$, such that $a^{\vee}, b^{\vee}$ are $\sco$-convolvers has better continuity properties if $a$, $b$ are functions from some classes of symbols. 

\begin{thm} \label{compthm}
Suppose that $g$ is a weight function on $\mathfrak{h}^{*}$ and $m$, $m'$ are $g$-weights.
If $a \in \smgh$ and $b \in \smghp$, then for all $k \in \mathds{N}$ there exist $k_{1}, k_{2} \in \mathds{N}$ such that
\begin{equation*}
\|a \# b\|_{k;\smghpp} \leq c_{k} \|a \|_{k_{1};\smgh} \|b\|_{k_{2};\smghp}.
 \end{equation*}
In particular, $\smgh \# \smghp \subseteq \smghpp$.
\end{thm}
\begin{proof}
Let $a=\widehat{A} \in S(m,g;\mathfrak{h}^{*}), b=\widehat{B} \in S(m',g;\mathfrak{h}^{*})$ and for $\lambda \neq 0$ let $\mathbf{g}_{\lambda}$ be diagonal metrics (\ref{gmet}). Then $a_{(\lambda)} \in S(m_{(\lambda)},\mathbf{g}_{\lambda})$, $b_{(\lambda)} \in S(m'_{(\lambda)},\mathbf{g}_{\lambda})$ with seminorms independent from $\lambda$.
By Leibniz's rule (\ref{lzs}) and (\ref{lorbi}), (i.e. $(a\#b)_{(\lambda)}=a_{(\lambda)} \#_{W} b_{(\lambda)}$) we get that for all $l \in \mathds{N}$,
\begin{align*}
(\partial^{l}_{2}(a \# b))_{(\lambda)} 
&=\sum_{l(\beta)+l(\gamma)= 2l} c_{\beta,\gamma} (\partial^{\beta}a \# \partial^{\gamma}b)_{(\lambda)}
=\sum_{l(\beta)+l(\gamma)= 2l} c_{\beta,\gamma} (\partial^{\beta}a)_{(\lambda)} \#_{W} (\partial^{\gamma}b)_{(\lambda)}
\\
&=\sum_{l(\beta)+l(\gamma)= 2l} c_{\beta,\gamma} (\lambda^{1 \over 2})^{-|\beta_{1}|-|\gamma_{1}|} \partial^{\beta_{1}}(\partial_{2}^{\beta_{2}}a)_{(\lambda)} \#_{W} \partial^{\gamma_{1}}(\partial_{2}^{\gamma_{2}}b)_{(\lambda)}
\end{align*}
Using the equivalent seminorms form Proposition \ref{chse} we obtain
\begin{multline*}
\|a \# b\|_{k;S(mm',g;\mathfrak{h}^{*})} = \max_{k_{1}+k_{2} \leq k} \sup_{\lambda \neq 0} \|(\partial^{k_{2}}_{2}(a \#b))_{(\lambda)}\|_{k_{1}; S(m_{(\lambda)}m'_{(\lambda)}g_{(\lambda)}^{-k_{2}},\mathbf{g}_{\lambda})}
\\
\leq
\max_{k_{1}+k_{2} \leq k} \sup_{\lambda \neq 0} \sum_{l(\beta)+l(\gamma)= 2k_{2}} |c_{\beta,\gamma}| |\lambda|^{-{|\beta_{1}|+|\gamma_{1}| \over 2}}
\|\partial^{\beta_{1}}(\partial_{2}^{\beta_{2}}a)_{(\lambda)} \#_{W} \partial^{\gamma_{1}}(\partial_{2}^{\gamma_{2}}b)_{(\lambda)}\|_{k_{1}; S(m_{(\lambda)}m'_{(\lambda)}g_{(\lambda)}^{-k_{2}},\mathbf{g}_{\lambda})}.
\end{multline*}
Let us notice that
\begin{equation*}
g_{(\lambda)(w)}^{-k_{2}} \geq |\lambda|^{-{|\beta_{1}|+|\gamma_{1}| \over 2}} \tau_{\mathbf{g}_{\lambda}}^{-{|\beta_{1}|+|\gamma_{1} \over 2}|} g_{(\lambda)(w)}^{-|\beta_{2}|-|\gamma_{2}|},
\end{equation*}
(recall that $\tau_{\mathbf{g}_{\lambda}}(w)=|\lambda|^{-1}g_{(\lambda)}(w)^{2}$) and thus the above can be estimated by
\begin{equation*}
\max_{k_{1}+k_{2} \leq k} \sup_{\lambda \neq 0} c_{k_{2}} \|\partial^{\beta_{1}}(\partial_{2}^{\beta_{2}}a)_{(\lambda)} \#_{W} \partial^{\gamma_{1}}(\partial_{2}^{\gamma_{2}}b)_{(\lambda)}\|_{k_{1}; S(m_{(\lambda)}m'_{(\lambda)}\tau_{\mathbf{g}_{\lambda}}^{-{1 \over 2}(|\beta_{1}|+|\gamma_{2}|)} g_{(\lambda)}^{-\beta_{2}-\gamma_{2}},\mathbf{g}_{\lambda})}.
\end{equation*}
By the symbolic calculus for pseudodifferential operators on the Phase Space, there exist $k_{1,1}, k_{1,2}$ such that the above is bounded by
\begin{multline*}
\max_{k_{1}+k_{2} \leq k} \sup_{\lambda \neq 0} c_{k_{2}} \|\partial^{\beta_{1}}(\partial_{2}^{\beta_{2}}a)_{(\lambda)}\|_{k_{1,1}; S(m_{(\lambda)}\tau_{\mathbf{g}_{\lambda}}^{-{1 \over 2}|\beta_{1}|} g_{(\lambda)}^{-\beta_{2}},\mathbf{g}_{\lambda})} \|\partial^{\gamma_{1}}(\partial_{2}^{\gamma_{2}}b)_{(\lambda)}\|_{k_{1,2}; S(m'_{(\lambda)}\tau_{\mathbf{g}_{\lambda}}^{-{1 \over 2}|\gamma_{1}|} g_{(\lambda)}^{-\gamma_{2}},\mathbf{g}_{\lambda})}
\\
\leq
\max_{k_{1}+k_{2} \leq k} \sup_{\lambda \neq 0} c_{k_{2}} \|(\partial_{2}^{\beta_{2}}a)_{(\lambda)}\|_{k_{1,1};
S(m_{(\lambda)} g_{(\lambda)}^{-\beta_{2}},\mathbf{g}_{\lambda})}
\|(\partial_{2}^{\gamma_{2}}b)_{(\lambda)}\|_{k_{1,2}; S(m'_{(\lambda)}g_{(\lambda)}^{-\gamma_{2}},\mathbf{g}_{\lambda})}.
\end{multline*}
Again, using the equivalent seminorms from Proposition \ref{chse} and choosing large enough $n_{1}, n_{2}$ we obtain that the above is estimated by
\begin{equation*}
c_{k}\|a\|_{n_{1};S(m,g;\mathfrak{h}^{*})} \|b\|_{n_{2};S(m',g;\mathfrak{h}^{*})},
\end{equation*} 
what ends the proof.
\end{proof}

Let $\Lambda$ denotes a weight $\Lambda(w,\lambda)=|\lambda|$. It is a weight for every weight function on $\mathfrak{h}^{*}$.
\begin{cor} \label{w58}
Suppose that $g$ is a weight function on $\mathfrak{h}^{*}$, and $m$ and $m'$ are $g$-weights.
If $a \in \smgh$ and $b \in \smghp$, then for all $N \in \mathds{N}$
\begin{equation}  \label{w522}
a \# b - \sum_{0 \leq j<N} {1 \over j!}({i \over 2}T_{2}[D_{1,1},D_{1,2}])^{j}(a, b) = R_{N}(a,b),
\end{equation}
where $R_{N}(a,b) \in S(mm'g^{-2N}\Lambda^{N},g;\mathfrak{h}^{*})$. Moreover, for $k, N \in \mathds{N}$ we have
\begin{equation*}
\|R_{N} (a,b)\|_{k;S(mm'g^{-2N}\Lambda^{N},g;\mathfrak{h}^{*})} \leq c_{k,N} \|a \|_{k_{1};S(m,g;\mathfrak{h}^{*})} \|b\|_{k_{2};S(m',g;\mathfrak{h}^{*})}.
\end{equation*}
for some $k_{1}, k_{2} \in \mathds{N}$. If, in addition,
\begin{equation} \label{rats}
\partial^{\alpha}a \in S(m_{1},g;\mathfrak{h}^{*}), \ \ \ \partial^{\beta}b \in S(m_{2},g;\mathfrak{h}^{*}), \ \ \ |\alpha|,|\beta| \geq N,
\end{equation}
then $R_{N}(a,b) \in S(m_{1}m_{2}\Lambda^{N},g;\mathfrak{h}^{*})$ and for $k, N \in \mathds{N}$ there exist $k_{1}, k_{2} \in \mathds{N}$ such that
\begin{equation*}
\|R_{N} (a,b)\|_{k;S(m_{1}m_{2}\Lambda^{N},g;\mathfrak{h}^{*})} \leq c_{k,N} \max_{|\alpha|=|\beta|=N} \|\partial^{\alpha}a \|_{k_{1};S(m_{1},g;\mathfrak{h}^{*})} \|\partial^{\beta}b\|_{k_{2};S(m_{2},g;\mathfrak{h}^{*})}.
\end{equation*}
\end{cor}
\begin{proof}
By Proposition \ref{proz} it is enough to show that the reminder term $R_{N}(a,b)$ given by (\ref{reha})
is in the class $S(mm'g^{-2N}\Lambda^{N},g;\mathfrak{h}^{*})$ with appropriated estimates of the seminorms. We have
\begin{align} \label{prre}
\|R_{N}(a,b)\|_{k;S(mm'g^{-2N}\Lambda^{N},g;\mathfrak{h}^{*})}&
\\ \nonumber
\leq \int_{0}^{1} (1-\theta)^{N-1} {1 \over (N-1)!} \| & ({i \over 2}T_{2}[D_{1,1},D_{1,2}]_{\#_{\theta}})^{N}(a,b) \|_{k;S(mm'g^{-2N}\Lambda^{N},g;\mathfrak{h}^{*})} \ d\theta.
\end{align}
We check that
\begin{align} \nonumber
\|(T_{2}[D_{1,1},D_{1,2}]_{\#_{\theta}})^{N}(a,b) \|_{k;S(mm'g^{-2N}\Lambda^{N},g;\mathfrak{h}^{*})}
&\leq \sum_{|\alpha|=|\beta|=N}|c_{\alpha,\beta,\theta}| \| \partial^{\alpha}a \#_{\theta} \partial^{\beta}b \|_{k;S(mm'g^{-2N},g;\mathfrak{h}^{*})}
\\ \label{kitka}
&\leq c_{N} \max_{|\alpha|=|\beta|=N} \|\partial^{\alpha}a \#_{\theta} \partial^{\beta}b\|_{k;S(mm'g^{-2N},g;\mathfrak{h}^{*})}.
\end{align}
By Leibniz's rule in the case of the group $\mathfrak{h}_{\theta}$ we conclude that Theorem $\ref{compthm}$ is also valid in the case of the groups $\mathfrak{h}_{\theta}$, $\theta \in (0,1)$ with constants $c_{k}$ independent from $\theta$.
Thus, (\ref{kitka}) is estimated by
\begin{equation*}
c_{N,k} \max_{|\alpha|=|\beta|=N} \|\partial^{\alpha}a \|_{k_{1};S(mg^{-N},g;\mathfrak{h}^{*})} \|\partial^{\beta}b \|_{k_{2};S(m'g^{-N},g;\mathfrak{h}^{*})}
\leq c_{N,k,l} \max_{|\alpha|=|\beta|=N} \|a\|_{l;S(m,g;\mathfrak{h}^{*})} \|b\|_{l;S(m',g;\mathfrak{h}^{*})},
\end{equation*}
for large enough $l \in \mathds{N}$. Consequently, (\ref{prre}) is bounded by
\begin{multline*}
\int_{0}^{1} {(1-\theta)^{N-1} \over (N-1)!} c_{N,k,l} \max_{|\alpha|=|\beta|=N} \|a\|_{l;S(m,g;\mathfrak{h}^{*})} \|b\|_{l;S(m',g;\mathfrak{h}^{*})} \ d\theta
\\
\leq c_{N,k,l} \max_{|\alpha|=|\beta|=N} \|a\|_{l;S(m,g;\mathfrak{h}^{*})} \|b\|_{l;S(m',g;\mathfrak{h}^{*})}.
\end{multline*}
In the similar way we get the thesis in the case (\ref{rats}).
\end{proof}

\begin{cor} \label{ogl}
Assume that $g$ is a weight function on $\mathfrak{h}^{*}$. If $A \in \sogd$, then the operator $\Op(A)$ is bounded on $\lth$. Moreover, there exists $k \in \mathds{N}$ such that
\begin{equation*}
\|\Op(A)\|_{\lth \to \lth} \leq C\|A\|_{k;\sogd}. 
\end{equation*}
\end{cor}
\begin{proof}
By Proposition \ref{wpt} we get
\begin{equation*}
\|\Op(A)\|_{\lth \to \lth} = \sup_{\lambda \neq 0} \|\pi^{\lambda}_{A}\|_{\ltrn \to \ltrn}
= \sup_{\lambda \neq 0} \|\Op^{W}(a_{\lambda})\|_{\ltrn \to \ltrn}, 
\end{equation*}
where $a_{\lambda}=\widehat{A}_{(\lambda)}$. Using equivalent seminorms we obtain
\begin{equation*}
\sup_{\lambda \neq 0}\|\Op^{W}(a_{\lambda})\|_{\ltrn \to \ltrn} \leq C\sup_{\lambda \neq 0}\|a_{\lambda}\|_{k;S(1,\mathbf{g}_{\lambda})} \leq C\|A\|_{k;S(1,g;\mathfrak{h})}. 
\end{equation*}
\end{proof}
Let us notice that more sublimated conditions determining $L^{2}$-boundedness of convolution operators in terms of symbols one can find in G{\l}owacki \cite{Glo1}, \cite{Glo5}.


\section{Inverses}
\subsection{Invertibility in the algebra of pseufodifferential operators}
Relation of invertibility of operators in theoretical sense and invertibility as pseudodifferential operators is quite natural.

Let symbols $a, b$ satisfy $a \#_{W} b = b\#_{W}a = 1$, i.e. $b$ is the inverse symbol of $a$. Such symbol will be sometimes denoted by $a^{-1\#_{W}}$. By Leibniz's rule for $a\#_{W}b=1$ we get the following rule of differentiating the inverse
\begin{equation*}
\partial^{\alpha} b = \sum_{p=1}^{|\alpha|} \sum_{\sum_{i=1}^{p} \alpha^{i} = \alpha} c_{\alpha^{1}\ldots\alpha^{p}} b \#_{W} \left(\partial^{\alpha^{1}}a \#_{W} b \right)\#_{W} \ldots \#_{W} \left( \partial^{\alpha^{p}}a \#_{W} b \right).
\end{equation*}

From the Beals characterization one can conclude the following corollary.
\begin{cor} \label{corbealsthm}
The class $S(1,\mathbf{1})$ has inverse-closed property, i.e. if $A=\Op^{W}(a)$, where $a$ belongs to the class $S(1,\mathbf{1})$, is invertible on $L^{2}$, then $A^{-1}=\Op^{W}(b)$ with $b \in S(1,\mathbf{1})$. Moreover,
\begin{equation} \label{csi}
\|b\|_{k;S(1,\mathbf{1})} \leq c_{k,l} \max_{1 \leq p \leq l} 
\|\Op^{W}(b)\|^{p+1}_{L^{2}(\rn) \to L^{2}(\rn)} \|a\|^{p}_{l;S(1,\mathbf{1})}.
\end{equation}
\end{cor}

Now, let us consider diagonal metrics (\ref{diag}).
The classes $S(1,\mathbf{g})$ also have inverse-closed property.
\begin{prop} \label{beod}
Let $a \in S(1,\mathbf{g})$ and let the operator $\Op^{W}(a)$ be invertible on $L^{2}(\mathds{R}^{n})$.
Then $\Op^{W}(a)^{-1}=\Op^{W}(a^{-1 \#_{W}})=\Op^{W}(b)$, where $b \in S(1,\mathbf{g})$. Moreover,
\begin{equation*}
\|a^{-1 \#_{W}}\|_{k;S(1,\mathbf{g})} \leq c_{k,l} \max_{1 \leq p \leq l} 
\|\Op^{W}(b)\|^{p+1}_{L^{2}(\rn) \to L^{2}(\rn)} \|a\|^{p}_{l;S(1,\mathbf{g})}
\end{equation*}
\end{prop}
\begin{proof}
Using that
\begin{equation*}
\|u\|_{k;S(1,\mathbf{g})} = \max_{|\alpha| \leq k} \|\partial^{\alpha}u\|_{0;S(g^{-|\alpha|},\mathbf{1})}, \qquad u \in S(1,\mathbf{g}),
\end{equation*}
we get
\begin{equation} \label{powyz}
\|a^{-1 \#_{W}}\|_{k;S(1,\mathbf{g})} = \max_{|\alpha| \leq k} \|\partial^{\alpha}a^{-1 \#_{W}}\|_{0;S(g^{-|\alpha|},\mathbf{1})}.
\end{equation}
If $a \in S(1,\mathbf{g})$, then $\partial^{\alpha}a \in S(g^{-|\alpha|},\mathbf{1})$, $\alpha \in \mathds{N}^{2n}$. In particular $a \in S(1,\mathbf{1})$. Using invertibility of the operator $\Op^{W}(a)$ we obtain that $a^{-1 \#_{W}} \in S(1,\mathbf{1})$. 
From the rule of differentiating of the inverse
and using the Weyl-H{\"o}rmander symbolic calculus on the Phase Space we get that (\ref{powyz}) is estimated by
\begin{equation*}
\max_{|\alpha| \leq k} \sum_{p;\alpha^{1},\ldots,\alpha^{p}} |c_{\alpha;\alpha^{1}\ldots\alpha^{p}}| \|a^{-1 \#_{W}}\|^{p+1}_{j;S(1,\mathbf{1})}\prod_{1 \leq i \leq p}\|\partial^{\alpha^{i}}a\|_{j(\alpha^{i});S(g^{-|\alpha^{i}|},\mathbf{1})},
\end{equation*}
where $0 \leq p \leq |\alpha|$ and $\sum_{i=1}^{p}|\alpha^{i}|=|\alpha|$.
Using again the estimates
\begin{equation*}
\|\partial^{\alpha}u\|_{k;S(g^{-|\alpha|},\mathbf{1})} \leq \|u\|_{k;S(1,\mathbf{g})}, \qquad u \in S(1,\mathbf{g}),
\end{equation*}
\end{proof}

The operator $A \in S(m,\mathbf{g})$ is defined on every subspace of $\mathcal{S}'(W)$.
For H{\"o}rmander metric $\mathbf{g}$ and $\mathbf{g}$-weight $m$ we define the Sobolev space
\begin{equation*} 
H(m,\mathbf{g})=\{u \in \mathcal{S}'(W): Au \in L^{2}(\mathds{R}^{n}), \ \ A \in \Op^{W} S(m,\mathbf{g})\}.
\end{equation*}
%

Beals \cite{Be3} extended the characterization \ref{bealsthm} for very general class of metrics and weight. It was clarified by Bony \cite{JMB2}, \cite{JMB} and Bony-Chemin \cite{BC} assuming so-called geodesically temperance of metrics. Such condition is satisfied by diagonal metrics.

\begin{thm}
Suppose that $m$, $m'$ are weights for diagonal metric $\mathbf{g}$. If $a \in S(m,\mathbf{g})$ and $\Op^{W}(a)$ is a topological isomorphism $H(m,\mathbf{g})$ onto $H(m'/m,\mathbf{g})$, then the symbol of the inverse operator $\Op^{W}(a)^{-1}$ is in the class $S(m^{-1},\mathbf{g})$.
\end{thm}
From the proof it follows that the seminorms of the inverse symbol depend only on seminorms of the initial symbol, structural constants of metric and norm in $\mathcal{B}(\ltrn)$ of the inverse operator.

Now, we consider the symbols with the parameter.
Let $m$ be a weight for metric $\mathbf{g}$. We say that the family of symbols ${(a_{u})}_{u \in U}$ is bounded in the class $S(m,\mathbf{g})$, if for all $k \in \mathds{N}$, seminorms $\|a_{u}\|_{k;S(m,\mathbf{g})}$ are unifirmly bounded.
\begin{cor}
Let $(a_{u})_{u \in U}$ be a family of symbols bounded in $S(m,\mathbf{g})$. Suppose that the operators $\Op^{W}(a_{u})$ are invertible on $\ltrn$ and moreover, 
the norms of the inverse operators $\Op^{W}(a_{u})^{-1}$ are uniformly bounded with respect to $u \in U$. Then, the family $({a_{u}^{-1\#}})_{u \in U}$ of the inverse symbols is bounded in the class $S(m^{-1},\mathbf{g})$.
\end{cor}

The following lemma (in different context, but the proof is the same) is due to G{\l}owacki \cite{Glo3} (Lemma 4.5). 
\begin{lem} \label{pglem}
Let ${(a_{u})}_{u \in U}$ be a family of symbols from the class $S(m,\mathbf{g})$ depending smoothly on $u \in U$. If the operators with the symbols $a_{u}$ are invertible and the family of the inverse symbols ${a_{u}^{-1\#}}$ is bounded in $S(m^{-1},\mathbf{g})$, then $a_{u}^{-1\#}$ depending smoothlu on the parameter $u \in U$.
\end{lem}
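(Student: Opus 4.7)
The plan is to formally differentiate the identity $a_u \# a_u^{-1\#} = 1$ and justify that the resulting formula
\[
\partial_u a_u^{-1\#} = -\,a_u^{-1\#} \# (\partial_u a_u) \# a_u^{-1\#}
\]
holds in the Fr\'echet space $S(m^{-1},g)$. The main tool is the composition theorem for $\#$ (the analogue on $W$ of Theorem 2.7/2.7' of \cite{Be3}), which provides joint continuity of $\#\colon S(m^{-1},g)\times S(m,g)\times S(m^{-1},g)\to S(m^{-1},g)$.

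First I would establish continuity of $u\mapsto a_u^{-1\#}$ as a map $U\to S(m^{-1},g)$. The resolvent-type identity
\[
a_{u+h}^{-1\#}-a_u^{-1\#} = a_{u+h}^{-1\#}\#(a_u-a_{u+h})\# a_u^{-1\#},
\]
combined with (i) the assumed boundedness of $(a_u^{-1\#})_{u\in U}$ in $S(m^{-1},g)$, (ii) continuity of $u\mapsto a_u$ in $S(m,g)$ (which follows from smoothness), and (iii) continuity of $\#$, yields $a_{u+h}^{-1\#}\to a_u^{-1\#}$ in $S(m^{-1},g)$ as $h\to 0$.

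For differentiability, I divide the same identity by $h$ and pass to the limit:
\[
\frac{a_{u+h}^{-1\#}-a_u^{-1\#}}{h} = -\,a_{u+h}^{-1\#}\#\frac{a_{u+h}-a_u}{h}\# a_u^{-1\#}.
\]
The right-hand side converges to $-a_u^{-1\#}\#(\partial_u a_u)\# a_u^{-1\#}$ in $S(m^{-1},g)$ by the already-proved continuity of $u\mapsto a_u^{-1\#}$, smoothness of $u\mapsto a_u$, and continuity of $\#$; this yields $C^1$ dependence together with the derivative formula.

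Higher-order smoothness then follows by induction on $k$. Assuming $u\mapsto a_u^{-1\#}$ is $C^k$ into $S(m^{-1},g)$, the formula expresses $\partial_u a_u^{-1\#}$ as a $\#$-product of three $C^k$ families (two copies of $a_u^{-1\#}$ and $\partial_u a_u$), which by continuity of $\#$ upgrades to a $C^k$ map, hence $a_u^{-1\#}$ is $C^{k+1}$. The main obstacle is the very first step: without a uniform seminorm bound on the inverses there is no way to control the factor $a_{u+h}^{-1\#}$ as $h$ varies, and it is precisely the hypothesis that $(a_u^{-1\#})$ is bounded in $S(m^{-1},g)$ that removes this obstruction and makes the composition theorem usable.
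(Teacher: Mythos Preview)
Your argument is correct and is the standard resolvent-identity proof of smooth dependence of inverses. The paper does not supply its own proof of this lemma; it simply imports the statement from \cite[Lemma 4.5]{Glo3}, so there is nothing in the present paper to compare against beyond noting that your approach is the expected one. One small point worth making explicit: when you pass from ``$a_{u+h}^{-1\#}$ bounded and $a_u-a_{u+h}\to 0$'' to convergence of the triple product, you are using not just continuity of $\#$ but the explicit seminorm estimates $|a\#b|_N\le C\,|a|_{M_1}|b|_{M_2}$ coming from the composition theorem; this is what guarantees that a bounded factor times a factor tending to zero still tends to zero in the Fr\'echet topology.
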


\begin{cor} \label{wn20}
Let ${(a_{u})}_{u \in U}$ be a family of symbols depending smoothly on $u \in U$ and bounded in $S(m,\mathbf{g})$.
Suppose that the operators $\Op^{W}(a_{u})$ are invertible on $\ltrn$ and seminorms of the inverse operators $\Op^{W}(a_{u})^{-1}$ are uniformly bounded with respect to $u \in U$. Then, the family $({a_{u}^{-1\#}})_{u \in U}$ depending smoothly with respect to $u \in U$.
\end{cor}

\subsection{Inverse-closed property on the Heisenberg group}
Before we show the inverse-closed property of the class $\smgh$ we present some useful ideas due to  G{\l}owacki \cite{Glo3}, \cite{Glo6}.

\begin{lem} \label{derin}
Let $A \in \smgd$, $C^{-1} \leq m \leq Cg$, let the operator $\Op(A)$ be invertible on $\lth$ and $\Op(B)=\Op(A)^{-1}$. Then, for all $\alpha \in \mathds{N}^{2n+1}$ we have
\begin{equation*}
T^{\alpha}B
=\sum_{1 \leq p \leq l(\alpha)} \sum_{\sum_{i=1}^{p} l(\alpha^{i})=l(\alpha)} c_{\alpha;\alpha^{1}\ldots\alpha^{p}}
(B*T^{\alpha^{1}}A)*\ldots*(B*T^{\alpha^{p}}A)*B.
\end{equation*}
\end{lem}
\begin{lem} \label{spod}
Let $A \in \smgd$, $C^{-1} \leq m \leq Cg$, let the operator $\Op(A)$ be invertible on $\lth$ and $\Op(B)=\Op(A)^{-1}$.
Then $B$ is an $\sco$-convolver.
\end{lem}

Let $A \in \sogd$ and let the operator $\Op(A)$ be invertible on $\lth$. Let $\Op(B)=\Op(A)^{-1}$ and $a=\widehat{A}$, $b=\widehat{B}$. Then, $a \# b = b \# a =1$ and we will sometimes write $b=a^{-1\#}$ and $B=A^{-1}$.
By Lemma \ref{spod} we get that, $B$ is also $\sco$-convolver and the operator $\pi^{\lambda}_{B}$ is well-defined.

On the other hand, $a_{(\lambda)} \in S(m_{(\lambda)},\mathbf{g}_{\lambda})$ and seminorms are independent from $\lambda$,
where
\begin{equation*}
(\mathbf{g}_{\lambda})_{w}(u) = {|\lambda|\|u\|^{2} \over g(w)^{2}}.
\end{equation*}
The operator $\pi^{\lambda}_{A}=\Op^{W}(a_{(\lambda)})$ is invertible on $\ltrn$. Thus we obtain
\begin{equation*}
\Op^{W}(b_{(\lambda)}) = \pi^{\lambda}_{B}=(\pi^{\lambda}_{A})^{-1}=\Op^{W}(a_{(\lambda)}^{-1\#_{W}}).
\end{equation*}

By Beals-Bony theorem we get that symbols $b_{(\lambda)}$ are in the classes $S(m_{(\lambda)}^{-1},\mathbf{g}_{\lambda})$. To get independence of the symbols $b_{(\lambda)}$ from the parameter $\lambda$ we need uniform invertibility of the operators $\Op^{W}(b_{(\lambda)})$. But it follows from Proposition \ref{wpt}. As a corollary we get
\begin{cor}
If $a \in \smgh$, then $b_{(\lambda)} \in S(m_{(\lambda)}^{-1},\mathbf{g}_{\lambda})$ with seminorms independent from $\lambda$.
\end{cor}

Symbols $b_{(\lambda)}$ are in the classes $S(m_{(\lambda)}^{-1},\mathbf{g}_{\lambda})$, in particular they are smooth (with respect to variable $w \in W$). We want to know if $b_{(\lambda)}$ are also smooth with respect to the parameter $\lambda$. The family $(a_{(\lambda)})_{\lambda \neq 0}$ is smooth with respect to  $\lambda$, as $a_{(\lambda)}(w)=\widehat{A}(\lambda^{1 \over 2}w,\lambda)$, where $A \in \smgd$.

We restrict the parameter $\lambda$ do the set $\Lambda_{k}=({1 \over k},k)$. There exists a positive constant $c_{k}$ such that
\begin{equation*}
c_{k}^{-1} \sup_{\lambda \in \Lambda_{k}}  \mathbf{g}_{\lambda} \leq \mathbf{g}_{1} \leq c_{k} \inf_{\lambda \in \Lambda_{k}} \mathbf{g}_{\lambda},
\end{equation*}
and moreover 
\begin{equation*}
c_{k}^{-1} \sup_{\lambda \in \Lambda_{k}}  m_{(\lambda)} \leq m_{(1)} \leq c_{k} \inf_{\lambda \in \Lambda_{k}} m_{(\lambda)}.
\end{equation*}
In particular for all $\lambda \in \Lambda_{k}$ the family $(a_{(\lambda)})_{\lambda \in \Lambda_{k}}$ is bounded in $S(m_{(1)},\mathbf{g}_{1})$. It follows from Corollary \ref{wn20} that the family of the inverse symbols $(b_{(\lambda)})_{\lambda \in \Lambda_{k}}$ is bounded in $S(m_{(1)}^{-1},\mathbf{g}_{1})$ and depends smoothly on the parameter $\lambda$.
The number $k$ can be chosen arbitrarily and thus the family $(b_{(\lambda)})_{\lambda \neq 0}$ smoothly depends on the parameter $\lambda$.

In the othr hand, $\pi^{\lambda}_{B}=\Op^{W}(\widehat{B}_{(\lambda)})$ and thus,
\begin{equation*}
\widehat{B}(w,\lambda) = b_{(\lambda)}(\lambda^{-{1 \over 2}}w), \ \ \ \lambda \neq 0.
\end{equation*}
Then we get
\begin{cor}
Suppose that $A \in \smgd$, the operator $\Op(A)$ is invertible on $\lth$ and $\Op(B)=\Op(A)^{-1}$. Then $\widehat{B}$ is smooth.
\end{cor}
Actually, we show the smoothness of $\widehat{B}$ outside the set $\lambda \neq 0$. We can assume that $\widehat{B}$ is smooth everywhere, what follows from Sobolev lemma.
\begin{lem}
Suppose that the (distributional) derivatives of $b$ of any order agrees with functions in $L^{2}_{\loc}(\mathds{R}^{2n+1})$. Then, there exists a smooth function $b'$ such that $b(x)=b'(x)$ almost everywhere. 
\end{lem}


Now, we show that the classes $\sogd$ have inverse-closed property. It will be the first step in showing Beals-Bony-type inverse theorem for wider class of weights.
The main tool is the characterization of the class $\sogh$ by using the seminorms of the class of symbols on the Phase Space indexed by the parameter $\lambda$ as well as the inverse-closed property of pseudodifferential operators.


If $B$ is an $\sco$-convolver, then also $T^{\alpha}B$ and $\alpha \in \mathds{N}^{2n+1}$. Thus, the operators
\begin{equation*}
\pi^{\lambda}_{T^{\alpha}B}=\Op^{W}((\partial^{\alpha}b)_{(\lambda)}), \qquad \alpha \in \mathds{N}^{2n+1}.
\end{equation*}
are well-defined.

\begin{thm}
If $A \in \sogd$ and if the operator $\Op(A)$ is invertible on $L^{2}(\mathfrak{h})$, then $B$ is in the class $\sogd$, where $\Op(B)=\Op(A)^{-1}$. Moreover for every $k$ there exists $l$ such that
\begin{equation} \label{eses}
\|a^{-1\#}\|_{k;\sogh} \leq c_{k,l} \max_{1 \leq p \leq l} 
\|\Op(B)\|^{p+1}_{\lth \to \lth} \|a\|^{p}_{l;\sogh}.
\end{equation}
\end{thm}
\begin{proof}
Let $a=\widehat{A}$ and $b=\widehat{B}$. Using equivalent seminorms we get
\begin{equation*}
\|b\|_{k;\sogh} = \max_{k_{1}+k_{2} \leq k} \sup_{\lambda \neq 0} \|(\partial^{k_{2}}_{2}b)_{(\lambda)}\|_{k_{1};S(g_{(\lambda)}^{-k_{2}},\mathbf{g}_{\lambda})}.
\end{equation*}
If $a \in \sogh$, then $(\partial^{\alpha^{i}}a)_{(\lambda)} \in S(g_{(\lambda)}^{-|\alpha^{i}|},\mathbf{g}_{\lambda})$. In particular $a_{(\lambda)} \in S(1,\mathbf{g}_{\lambda})$ and
$b_{(\lambda)} \in S(1,\mathbf{g}_{\lambda})$. By the symbolic calculus and the formula from Corollary \ref{derin} we estimate the above by
\begin{multline*}
\max_{k_{1}+k_{2} \leq k} \sup_{\lambda \neq 0} \sum_{\substack{\{s,\alpha^{1},\ldots,\alpha^{s}: s \leq l(\alpha); \\ \sum_{i=1}^{s}l(\alpha^{i})=2k_{2}\}}} |c_{\alpha;\alpha^{1}\ldots\alpha^{p}}| \|b_{(\lambda)}\|^{s+1}_{j;S(1,\mathbf{g}_{\lambda})} \prod_{1 \leq i \leq s}\|(\partial^{\alpha^{i}}a)_{(\lambda)}\|_{j_{i}(k_{1});S(g_{(\lambda)}^{-|\alpha^{i}|},\mathbf{g}_{\lambda})}
\\
\leq c_{k} \max_{1 \leq s \leq m} \sup_{\lambda \neq 0} \|b_{(\lambda)}\|^{s+1}_{m;S(1,\mathbf{g}_{\lambda})}, \|a_{(\lambda)}\|^{s}_{m;S(1,\mathbf{g}_{\lambda})}.
\end{multline*}
for some $m \in \mathds{N}$. By Proposition \ref{beod} we get that the above is bounded by
\begin{equation} \label{bogf}
c_{k} \max_{1 \leq r, s \leq m} \sup_{\lambda \neq 0} \|\Op^{W}(a_{(\lambda)})^{-1}\|^{(s+1)(r+1)}_{L^{2}(\rn) \to L^{2}(\rn)} \|a_{(\lambda)}\|^{(s+1)r+s}_{m;S(1,\mathbf{g}_{\lambda})},
\end{equation}
maybe at the cost of the number $m$. It follows from Proposition \ref{wpt} that 
\begin{equation*}
\sup_{\lambda \neq 0} \|\Op^{W}(a_{(\lambda)})^{-1}\|_{L^{2}(\rn) \to L^{2}(\rn)}
\leq \|\Op(B)\|_{\lth \to \lth}
\end{equation*}
and thus, again using the equivalent seminorms, (\ref{bogf}) is estimated for some $l \in \mathds{N}$ by
\begin{equation*}
c_{k,l} \max_{1 \leq p \leq l} \|\Op(B)\|^{p+1}_{\lth \to \lth} \|a\|^{p}_{l;\sogh}
\end{equation*}
for some $p \in \mathds{N}$.
\end{proof}

Strictly speaking, estimates of the derivatives of the inverse symbol $b$
\begin{equation*}
|\partial^{\alpha}_{w} \partial^{\beta}_{\lambda} b(w,\lambda)| \leq c_{\alpha,\beta} g(w,\lambda)^{-|\alpha|-\beta}, \qquad \alpha \in \mathds{N}^{2n}, \beta \in \mathds{N}
\end{equation*}
we obtain initially outside the set $\lambda = 0$. The derivatives of $b$ of arbitrary order are locally in  $\lth$. Similarly we have for $\partial_{1}^{\alpha}\partial_{2}^{\beta}b$, where $\alpha \in \mathds{N}^{2n}, \beta \in \mathds{N}$. By Sobolev lemma we can assume that estimates of the symbol $b$ are satisfied in fact for all $(w,\lambda) \in \mathfrak{h}^{*}$ with the same constants.

The same remark remains in force in the case of arbitrary weight $m$.


\subsection{Beals-type theorem}

In the following subsection we partially use the ideas of Beals \cite{Be1}.

If $m$ is a weight for a weight function $g$ on $\mathfrak{h^{*}}$, then we define
\begin{equation*}
\hmgh = \{v \in \scop: \Op(A)v \in \lth, A \in \smgd\}.
\end{equation*}
The topology on $\hmgh$ is given by the family of seminorms $\mathcal{N}_{A}(v)=\|Av\|_{\lth}$, $A \in \smgd$. From the definition we have $\sco \subset \hmgh \subset \scop$.

Let $m \geq 1$.  
The convolution operator with a distribution $A \in \smgd$ which is invertible on $\lth$ and satisfies $\Op(B)=\Op(A)^{-1}$, where $B \in \smigd$ is called a \textit{maximal operator}.
\begin{prop}
If $\Op(A)$ is a maximal operator and $A \in \smgd$, then for every $T \in \smgd$ we have
\begin{equation*}
\|\Op(T)f\|_{\lth} \leq C \|\Op(A)f\|_{\lth}, \ \ \ \ \ f \in \sco.
\end{equation*}
In particular $\hmgh$ is the domain of the operator $\Op(A)$.
\end{prop}
\begin{proof}
If $\Op(B)\Op(A)=\Op(A)\Op(B)=I$, then for $f \in \sco$ we have
\begin{equation*}
\|\Op(T)f\|_{\lth} = \|\Op(T)\Op(B)\Op(A)f\|_{\lth} \leq \|\Op(T)\Op(B)\|_{\lth \to \lth}\|\Op(A)f\|_{\lth}.
\end{equation*}
On the other hand, $T \in \smgd$, $B \in \smigd$, so $T*B \in \sogd$ and by Corollary \ref{ogl} we have for some $k$ the following estimation
\begin{equation*}
 \|\Op(T)f\|_{\lth} \leq c_{k}\|T*B\|_{k; \sogd} \|\Op(A)f\|_{\lth}.
\end{equation*}
\end{proof}


We say that a weight $m$ satisfies elliptic condition, if
$C^{-1} \leq m \leq Cg$ and 
there exists a symbol $u \in \smgh$ such that $|u| \geq C^{-1}m$. Without loss of generality we assume that $u$ is positive and let $U=u^{\vee}$.

Let us notice that the function
\begin{equation*}
l(w,\lambda) = 1+\log(1+\|w\|^{2}+|\lambda|^{2}),
\end{equation*}
satisfies elliptic condition. It is a weight for the weight function $\rho$, where
$\rho(w,\lambda) = (1+\|w\|^{2}+|\lambda|^{2})^{{1 \over 2}}$.

\begin{lem} \label{l418}
If $m$ is a weight for $g$ and it satisfying ellipticity condition, then there exist families $u_{N} \in \smgh$, $v_{N} \in \smigh$ such that
\begin{equation*}
\lim_{N \to \infty} u_{N} \# v_{N} \to 1
\end{equation*}
in $S(1,g;\mathfrak{h}^{*})$.
\end{lem}
\begin{proof}
Let $u \in \smgh$ be a symbol comparable with the weight $m$, and let $\psi$ be a bump function.
We define
\begin{equation*}
u_{N}(\xi)=N\psi(N^{-1}u(\xi))+u(\xi)(1-\psi(N^{-1}u(\xi))), \qquad \xi \in \mathds{R}^{2n+1}.
\end{equation*}
Then $u_{N} \in \smgh$ (the seminorms can depend on $N$). Moreover, for $|\alpha| \geq 1$ the family $(N\partial^{\alpha}u_{N})_{N \in \mathds{N}}$ is bounded in $S(mg,g;\mathfrak{h}^{*})$ uniformly with respect to $N$.

Let $v_{N}=u_{N}^{-1}$ (the inverse in sense of ordinary multiplication). By Fa\'a di Bruno formula and $u_{N} \in \smgh$ we get that $v_{N} \in \smigh$. We also have the following estimations
\begin{equation} \label{fdbi}
\|v_{N}\|_{k;\smigh} \leq c_{N,k}\|u_{N}\|_{k;\smgh}. 
\end{equation}
Moreover, for $|\alpha| \geq 1$ the family $(N\partial^{\alpha}v_{N})_{N \in \mathds{N}}$ is bounded in  $S(m^{-1}g,g;\mathfrak{h}^{*})$ uniformly with respect to $N$. By the asymptotic form (\ref{w522}) of the length $1$ for $u_{N} \# v_{N}$ we get that
\begin{equation} \label{rasl}
u_{N} \# v_{N} - 1 =N^{-2}r(u_{N},v_{N}),
\end{equation}
where $r(u_{N},v_{N}) \in \sogh$ uniformly with respect to $N$, what ends the proof. In the similar way one can show that
\begin{equation*}
\lim_{N \to \infty} v_{N} \# u_{N} \to 1
\end{equation*}
in $S(1,g;\mathfrak{h}^{*})$.
\end{proof}

\begin{prop} \label{pr73}
If $m$ is a weight for $g$ satisfying ellipticity condition, then there is a maximal operator $\Op(S)$, such that $S \in \smgd$. Moreover, for every $k \in \mathds{N}$ there is $l \in \mathds{N}$ such that
\begin{equation*}
\|\sigma^{-1\#}\|_{k;\smigh} \leq c_{k,l} \max_{1 \leq p \leq l} \|\sigma\|^{2p+1}_{l;\smgh},
\end{equation*}
where $\sigma=\widehat{S}$.
\end{prop}
\begin{proof}
Let $u_{N} \in \smgh$, $v_{N} \in \smigh$ be the symbols from the previous lemma and let $U_{N}=u_{N}^{\vee}$, $V_{N}=v_{N}^{\vee}$. For large enough $N_{0}$ the operators
\begin{equation*}
\Op(U_{N_{0}})\Op(V_{N_{0}}), \ \ \ \ \ \Op(V_{N_{0}})\Op(U_{N_{0}})
\end{equation*}
are invertible. Let $S=U_{N_{0}}$. Then $S \in \smgd$ and $\Op(S)$ is a surjection with the continuous right inverse
\begin{equation*}
\Op(V_{N_{0}})(\Op(S)\Op(V_{N_{0}}))^{-1}
\end{equation*}
and it is injective with the continuous left inverse
\begin{equation*}
(\Op(V_{N_{0}})\Op(S))^{-1}\Op(V_{N_{0}}).
\end{equation*}
By the symbolic calculus it follows that $S^{-1} \in \smigd$. We also have
\begin{equation*}
\|\sigma^{-1\#}\|_{k;\smigh} = \|v_{N_{0}} \# (\sigma \# v_{N_{0}})^{-1\#}\|_{k;\smigh}.
\end{equation*}
By the symbolic calculus, Proposition \ref{beod} and $(\ref{fdbi})$ there exists $l \in \mathds{N}$ such that
\begin{equation*}
\|\sigma^{-1\#}\|_{k;\smigh} \leq  c_{k,l} \max_{1 \leq p \leq l} \|\sigma\|^{2p+1} \|\Op(U_{N_{0}})\Op(V_{N_{0}})\|^{p+1}_{\lth \to \lth},
\end{equation*}
what, at the cost of the constant $c_{k,l}$, gives the thesis.
\end{proof}


\begin{thm} \label{tebi}
Let $m$ be a weight for a weight function $g$ on $\mathfrak{h}^{*}$ and let $m$ satisfies ellipticity condition. If $A \in \smgh$ and $\Op(A)$ is invertible on $L^{2}(\mathfrak{h})$, $\Op(B)=\Op(A)^{-1}$, then $B$ is in the class $\smigh$.
\end{thm}
\begin{proof}
Let $\sigma$ be the symbol of the maximal operator $S$. By the symbolic calculus
\begin{equation*}
\|a^{-1\#}\|_{k;S(m^{-1},g;\mathfrak{h}^{*})} = \|a^{-1\#} \# \sigma \# \sigma^{-1\#}\|_{k;S(m^{-1},g;\mathfrak{h}^{*})}
\leq c_{k} \|a^{-1\#} \# \sigma \|_{k_{1};S(1,g;\mathfrak{h}^{*})}
\|\sigma^{-1\#}\|_{k_{2};S(m^{-1},g;\mathfrak{h}^{*})},
\end{equation*}
for some $k_{1}$, $k_{2}$.
\end{proof}


\subsection{Resolvent}
We investigate the resolvents of the pseudodifferential operators. Some results of the previous subsection are little modified, because we work with the parameter-dependent symbols.

\begin{lem}
Let $z \in \mathds{C}$ and $m$ be a weight for a weight function $g$ on $\mathfrak{h}^{*}$. Then the functions
\begin{equation*}
m_{|z|} = |z|+m
\end{equation*}
are weights for the weight function $g$. Moreover the structural constants are independent from $z$.
\end{lem}

Let $a \in \smgh$. Then $a_{z}:=z+a$ is in the class $\smzgh$ with the seminorms independent from $z$. Moreover, for all $|\alpha| \geq 1$ we have $|\partial^{\alpha}a_{z}|=|\partial^{\alpha}a|$ and thus, $\partial^{\alpha}a_{z} \in S(mg^{-|\alpha|},g;\mathfrak{h}^{*})$ uniformly with respect to $z$.

Notice that if $m$ satisfy elliptic condition then $m_{|z|}$ also.
However, the constant $C$ in the condition $m_{|z|} \leq Cg$ can be depend on $z$ and we need to be careful. 

We restrict to the complex numbers with the sector
\begin{equation*}
\Sigma_{\psi_{0}} = \{z \in \mathds{C}: |\arg(z)| < \psi_{0} \}.
\end{equation*}
Notice that then $\Rez(z) \geq \cos(\psi_{0})|z|$ and thus for all $a \geq 0$ we have
\begin{equation*}
|z|+a \leq \left({2 \over 1+\cos^{2}(\psi_{0})}\right)^{{1 \over 2}} |z+a|.
\end{equation*} 

\begin{lem}
If $m$ is an elliptic weight for $g$, then there exist families $u_{N,z} \in \smzgh$, $v_{N,z} \in \smzigh$ uniformly with respect to $z \in \Sigma_{\psi_{0}}$ such that
\begin{equation*}
\lim_{N \to \infty} u_{N,z} \# v_{N,z} \to 1
\end{equation*}
in $S(1,g;\mathfrak{h}^{*})$ uniformly.
\end{lem}
\begin{proof}
We define $u_{N,z}=z+u_{N}$, where $u_{N}$ is as in Lemma \ref{l418}.
Then $u_{N,z} \in \smzgh$ and the seminorms can depend on $N$, but are independent from $z$. Moreover, for $|\alpha| \geq 1$ we have
$N\partial^{\alpha}u_{N,z} \in S(m,g;\mathfrak{h}^{*})$ uniformly with respect to $N$ and $z$.

The further part of the proof is the same as in Lemma \ref{l418}.

\end{proof}

\begin{prop} \label{p77}
If $m$ is elliptic weight for $g$, then there exists a maximal operator $\Op(S(z))$, where $S(z) \in \smzgd$ with the seminorms independent from $z$. Moreover, the inverse operator satisfies
\begin{equation*}
\|\sigma(z)^{-1\#}\|_{k;\smzigh} \leq c_{k,l} \max_{1 \leq p \leq l} \|\sigma(z)\|^{2p+1}_{l;\smzgh},
\end{equation*}
where $\sigma(z)=\widehat{S}(z)$.
\end{prop}
\begin{proof}
Let $u_{N,z} \in \smzgh$, $v_{N,z} \in \smzigh$ be the symbols from the previous lemma and let $U_{N,z}=u_{N,z}^{\vee}$, $V_{N,z}=v_{N,z}^{\vee}$. Then, for large enough $N_{0}$, the operators
\begin{equation*}
\Op(U_{N_{0},z})\Op(V_{N_{0},z}), \ \ \ \ \ \Op(V_{N_{0},z})\Op(U_{N_{0},z})
\end{equation*}
are invertible. The thing is the number $N_{0}$ can be chosen independently form $z$, because $r(u_{N,z},v_{N,z}) = r(u_{N},v_{N})$. The further reasoning in identical as in Proposition \ref{pr73}.
\end{proof}


\begin{cor}
Suppose that $m$ is an elliptic weight for a weight $g$ and $\Op(A) \in \smgd$ is invertible. Also let $z$ be a number from the resolvent set of the operator $\Op(A)$ and simultaneously $z \in \Sigma_{\psi}$, $\psi > {\pi \over 2}$. Suppose that $\|R_{z}\|_{\lth \to \lth} \leq M|z|^{-1}$ for some $M>0$. Then $\Op(A_{z})=zI-\Op(A)$ is a maximal operator and $\Op(A_{z})\smzgd$. In particular, $R_{z} \in \smzigd$ with the seminorms independent from $z$.
\end{cor}
\begin{proof}
By Theorem \ref{tebi} we get that $\Op(A)$ is a maximal operator and $A \in \smgd$.
Let $S(z)$ be a maximal operator, $S(z) \in \smzgd$, from Proposition\ref{p77}, and let $\sigma_{z}$ be its symbol.
In particular, we obtain $\sigma_{z}\in S(m_{|z|},g;\mathfrak{h}^{*})$, and $\sigma_{z}^{-1\#} \in S(m_{|z|}^{-1},g;\mathfrak{h}^{*})$. The seminorms are independent from $z$.
By the symbolic calculus we get
\begin{equation*}
\|r_{z}^{-1\#}\|_{k;S(m_{|z|}^{-1},g;\mathfrak{h}^{*})} = \|\sigma_{z}^{-1\#} \# \sigma_{z} \# r^{-1\#}\|_{k;S(m_{|z|}^{-1},g;\mathfrak{h}^{*})}
\leq c_{k} \|\sigma_{z} \# r_{z}^{-1\#}\|_{k_{1};S(1,g;\mathfrak{h}^{*})}
\|\sigma_{z}^{-1\#}\|_{k_{2};S(m_{|z|}^{-1},g;\mathfrak{h}^{*})},
\end{equation*}
for soem $k_{1}$, $k_{2}$.
Using by the estimates of the seminorms of the inverse as in (\ref{eses}), we estimate $(a_{z} \# \sigma_{z}^{-1\#})^{-1\#}$ by
\begin{equation*}
c_{k,j} \max_{1 \leq s \leq j} \|a_{z} \# \sigma_{z}^{-1\#}\|^{s}_{j;S(1,g;\mathfrak{h}^{*})} \|(zI-\Op(S))R_{z}\|_{\lth \to \lth}.
\end{equation*}
We check that
\begin{align*}
\|(zI-\Op(S))R_{z}\|_{\lth \to \lth}
&\leq |z|\|R_{z}\|_{\lth \to \lth} + \|-\Op(S)R_{z}\|_{\lth \to \lth}
\\
&\leq M+\|-\Op(A)(zI+\Op(A))^{-1}\|_{\lth \to \lth} \leq 2M+1.
\end{align*}
The above estimates give the thesis.
\end{proof}


If $A$ is a symmetric distribution on $\mathfrak{h}$, then it is a symmetric distribution on each $\mathfrak{h}_{\theta}$, $\theta \in [0,1]$. Consequently, the operators $\Op_{\theta}(A)f:=A*_{\theta}f$ are selfadjoint on  $L^{2}(\mathds{R}^{2n+1})$. 
Let $B_{z}^{\theta}$ be the resolvents for $A$ with respect to the convolution $*_{\theta}$. In particular we have
\begin{equation*} 
B_{z}^{0}*_{0}(z\delta-P)=(z\delta-P)*_{0}B_{z}^{0}=\delta, \ \ \ B_{z}^{1}*(z\delta-P)=(z\delta-P)*B_{z}^{1}=\delta.
\end{equation*}
From the proof of the previous corollary we get that $B_{z}^{\theta}$, $\theta \in [0,1]$ are in the classes $S((|z|+m)^{-1},\rho;\mathfrak{h}_{\theta})$ uniformly with respect to $\theta$ (in particular, they are $\mathcal{S}$-convolvers on the groups $\mathfrak{h}_{\theta}$).
We denote $b_{z,\theta}=\widehat{B_{z,\theta}}$. In particular, $b_{z}^{0}= (z-a)^{-1}$ (the inverse in the sense of common multiplication). By Beals Theorem $b_{z,\theta}$ is in the class $\smzigd$ and the seminorms are independent from $z$ and $\theta$. We obtain decomposition of $b_{z}^{1}$ in terms of (simplest) symbols $b_{z}^{0}$. 
\begin{prop} \label{rzlr}
We have
\begin{equation*}
b_{z}^{1}=b_{z}^{0}+H_{z},
\end{equation*}
where $H_{z} \in S({m \over (m+|z|)^{2}}g^{-2},g;\mathfrak{h}^{*})$.
\end{prop}
\begin{proof}
Let $a_{z}=z+a$. Then $a_{z} \in S(m_{|z|},g;\mathfrak{h}^{*})$, but $\partial^{\alpha}a_{z} \in S(mg^{-|\alpha|},g;\mathfrak{h}^{*})$. Moreover, $b_{z}^{0} \in S(m_{|z|}^{-1},g;\mathfrak{h}^{*})$ and by Fa\`a di Bruno formula we get $\partial^{\alpha}b_{z}^{0} \in S({m \over m_{|z|}^{2}}g^{-|\alpha|},g;\mathfrak{h}^{*})$. We have the following decompose of $a_{z} \# b_{z}^{0}$
\begin{equation*}
a_{z} \# b_{z}^{0} = a_{z}b_{z}^{0}
+ S_{1}(a_{z},b_{z}^{0}) 
+  R_{2}(a_{z},b_{z}^{0}),
\end{equation*}
where
\begin{equation*}
S_{1}(a_{z},b_{z}^{0}) = {i\lambda \over 2} \sum_{j=1}^{n} (\partial_{1,1,j}a_{z}\partial_{1,2,j}b_{z}^{0}-\partial_{1,2,j}a_{z}\partial_{1,1,j}b_{z}^{0}).
\end{equation*}
Moreover, $R_{2}(a_{z},b_{z}^{0}) \in S({m^{2} \over m_{|z|}^{2}}\Lambda^{2}g^{-4},g;\mathfrak{h}^{*})$.
and we have $a_{z}b_{z}^{0}=1$. Moreover, for $|\alpha|=1$ we have $\partial^{\alpha}a_{z}=\partial^{\alpha}a$ and $\partial^{\alpha}b_{z}^{0}=(b_{z}^{0})^{2}\partial^{\alpha}a$ and thus $S_{1}(a_{z},b_{z}^{0})=0$. Then,
\begin{equation*}
a_{z} \# b_{z}^{0} = 1 +  R_{2}(a_{z},b_{z}^{0}).
\end{equation*}
Acting by $\#$ with $b_{z}^{1}$ we obtain
\begin{equation*}
b_{z}^{1} = b_{z}^{0} - b_{z}^{1} \# R_{2}(a_{z},b_{z}^{0}).
\end{equation*}
Iterating the above expression we get
\begin{equation*}
b_{z}^{1} = b_{z}^{0} - (b_{z}^{0} - b_{z}^{1} \# R_{2}(a_{z},b_{z}^{0})) \# R_{2}(a_{z},b_{z}^{0}).
\end{equation*}
By the symbolic calculus we get that $b_{z}^{0} \# R_{2}(a_{z},b_{z}^{0})$ is in the class $S({m^{2} \over m_{|z|}^{3}}\Lambda^{2}g^{-4},g;\mathfrak{h}^{*})$. On the other hand, $b_{z}^{1} \# R_{2}(a_{z},b_{z}^{0}) \# R_{2}(a_{z},b_{z}^{0})$ is in the class $S({m^{4} \over m_{|z|}^{5}}\Lambda^{4}g^{-8},g;\mathfrak{h}^{*})$ and finally,
\begin{equation*}
b_{z}^{1} - b_{z}^{0} \in S({m^{2} \over m_{|z|}^{3}}\Lambda^{2}g^{-4},g;\mathfrak{h}^{*}).
\end{equation*}
\end{proof}


\section{Semigroups of measures} \label{s4}
\subsection{Generalised laplacians and Hunt theory}

Let $G$ be a (connect, simply connect, nilpotent) Lie group $G=(\mathds{R}^{d},\circ)$ with the neutral element $e$ and let  $\mathcal{M}(G)$ denotes the set of probabilistic Borel measures on $G$. The convolution of measures $\mu_{1}, \mu_{2} \in \mathcal{M}(G)$ is defined by
\begin{equation*}
\int_{G} f(x) (\mu_{1}*\mu_{2})(dx) = \int_{G}\int_{G} f(x \circ y) \mu_{1}(dx) \mu_{2}(dy), 
\end{equation*}
for real bounded Borel functions on $G$. \textit{Reversed measure} $\widetilde{\mu}$ associated to the measure $\mu \in \mathcal{M}(G)$ is defined by $\widetilde{\mu}(A)=\mu(A^{-1})$. A measure $\mu \in \mathcal{M}(g)$ is called symmetric if $\mu=\widetilde{\mu}$.
We say that a semigroup is symmetric if each $\mu_{t}$ is. Notice that $\mu_{t}$ is a convolution semigroup of measures iff $\widetilde{\mu_{t}}$ is.

If $P$ is a generalized laplacian, then for every bump function $\varphi \in C_{c}^{\infty}(G)$, the distribution $(1-\varphi)P$ is a positive bounded measure, so $P$ can be decompose as a sum $P=P_{0}+\mu$, where $P_{0}$ is a distribution with compact support and $\mu$ a bounded positive measure.
In particular $P$ is tempered distribution. The formula
\begin{equation*} 
\langle \nu, f \rangle = \langle P, f \rangle, \ f \in C_{c}^{\infty}(G \backslash \{e\}),
\end{equation*}
defined the L{\'e}vy measure of the functional $P$. 
Moreover, every generalized laplacian extends to continuous linear functional on $C^{2}(G)$ and such extension has a property (\ref{max}) (see e.g. Faraut \cite{Far}, Proposition IV.1).

A semigroup of measures $(\mu_{t})_{t>0}$ determines the strongly continuous semigroup of linear operators of contraction $C_{0}(G)$ by
\begin{equation}
T_{t}f(x)=\mu_{t}*f(x) = \int_{G}f(y^{-1} \circ x)\mu_{t}(dy).
\end{equation}
The infinitesimal generator of semigroup $T_{t}$ is denoted by $\mathcal{L}$ and called the \textit{Hunt generator}. The semigroup $T_{t}$ is called the \textit{Hunt semigroup}.


Let $(X_{j})_{1 \leq j \leq d}$ be a base of left-invariant vector fields. We define
\begin{equation*}
C^{2}(G) = \{f \in C_{0}(G): X_{i}f \in C_{0}(G), X_{i}X_{j}f \in C_{0}(G), 1 \leq i, j \leq d\}.
\end{equation*}

Let us introduce a notion of coordinate functions. Let $\Phi_{j} \in C_{c}^{\infty}(G)$ be real functions such that
\begin{equation}
X_{i}\Phi_{j}(e)=\delta_{i,j}, \ \ \ 1 \leq  i, j \leq d.
\end{equation}
There exist a smooth function $\Phi^{2}$ which is $[0,1]$-valued such that
\begin{enumerate}[\upshape(i)] 
\item $\Phi^{2} = \sum_{i=1}^{d} \Phi_{i}^{2}$ in neighborhood of $e$,
\item $\Phi^{2} = 1$ outside a compact neighborhood of $e$,
\item $\Phi^{2} > 0$, \qquad $x \neq e$.
\end{enumerate}
The function $\Phi^{2}$ is called \textit{Hunt function}.

A measure $\nu$ on Borel subsets of $G \backslash \{e\}$ is called a \textit{L{\'e}vy measure}, if
\begin{equation*}
\int_{G \backslash \{e\}} \{1 \wedge \Phi^{2}(y) \} \nu(dy) < \infty.
\end{equation*}

\begin{thm}
Let $(\mu_{t})_{t>0}$ be a convolution semigroup of measures on $G$ with the Hunt generator $\mathcal{L}$. Then the domain
of $\mathcal{L}$ contains $C^{2}(G)$ and for all $\sigma \in G$, $f \in C^{2}(G)$
\begin{equation*}
\mathcal{L}f(x) = b^{i}X_{i}f(x)+a^{ij}X_{i}X_{j}f(x)+\int_{G \backslash \{0\}} (f(y^{-1} \circ x)-f(x)-\Phi_{i}(y)X_{i}f(x)) \nu(dy),
\end{equation*}
where $b=(b^{1},\ldots,b^{d}) \in G$, $a=(a^{ij})$ is a negative definite symmetric real matrix $d \times d$, and $\nu$ is a L{\'e}vy measure on $G - \{0\}$.
\end{thm}

Let us notice that the Hunt generator $\mathcal{L}$ is related to the generating functional $P$ of semigroup $\mu_{t}$ by $\langle P, f \rangle = \mathcal{L}f(0)$ and
\begin{equation*}
\mathcal{L}f(x)=P*f(x), \qquad f \in C_{c}^{\infty}(G).
\end{equation*}

The semigroup $T_{t}$ restricts to $C_{c}^{\infty}(G)$ extends to the bounded operator on $L^{2}(G)$ which will be denote in the same way. The semigroup $(T_{t})_{t>0}$ is then a strongly continuous semigroup of contractions on $L^{2}(G)$. From now on we will consider the Hunt semigroup on $L^{2}(G)$. It is not hard to see that $\widetilde{T_{t}}=T_{t}^{*}$ and $\widetilde{\mathcal{L}}=\mathcal{L}^{*}$.
Moreover, selfadjointess of the infinitesimal generator $\mathcal{L}$ is equivalent to symmetricity of $P$.
In that case, the semigroup $T_{t}$ can be extended to an analytic semigroup.

We recall some aspects of theory of analytic semigroups from Pazy \cite{Paz}.
Let $\Sigma = \{z \in \mathds{C}: \varphi_{1} < \arg z < \varphi_{2}, \varphi_{1} < 0 < \varphi_{2}\}$, i.e. $\Sigma$ is a \textit{sector}. We say that a semigroup $T_{z}$ is analytic in the sector $\Sigma$, if $z \mapsto T_{z}$ is analytic in $\Sigma$ and $T_{z_{1}}T_{z_{2}}=T_{z_{1}+z_{2}}$ for $z_{1}, z_{2} \in \Sigma$, $T_{0}=I$ and moreover,
\begin{equation*}
\lim_{\substack{z \to 0 \\ z \in \Sigma}} T_{z}x \to x, \qquad x \in X.
\end{equation*}
We say that a semigroup of operators $T_{t}$ is analytic, if it is analytic in some sector containing the nonnegative ray. Analyticity of the semigroup in the sector $\Sigma_{\varphi} = \{z \in \mathds{C}: -\varphi < \arg z < \varphi\}$, $\varphi \leq {\pi \over 2}$ is equivalent to the fact that $\Sigma_{{\pi \over 2}+\varphi} \subset \rho(A)$ and
\begin{equation*}
\|R_{z}\| \leq M|z|^{-1}, \ \ \ z \in \Sigma_{{\pi \over 2}+\varphi}.
\end{equation*}
In that case we can inverse the Laplace transform.
\begin{prop} \label{classy}
Let $R_{z}=R_{z}(A)$ be the resolvent of $A$ and let
\begin{equation*}
\|R_{z}\| \leq M|z|^{-1}, \ \ \ z \in \Sigma_{{\pi \over 2}+\varphi}.
\end{equation*}
Then,
\begin{equation*} 
T_{t} = \int_{\Gamma} e^{zt} R_{z} \ dz,
\end{equation*}
where $\Gamma$ is a smooth curve given by
$$\Gamma= \begin{cases}
re^{i\varphi_{0}} & \mbox{dla } r \geq t^{-1},\cr
t^{-1}e^{i\varphi} & \mbox{dla } -\varphi_{0} \leq \varphi \leq \varphi_{0},\cr
re^{-i\varphi_{0}} & \mbox{dla } r \geq t^{-1}, \end{cases}$$
for every ${\pi \over 2} < \varphi_{0} < \pi$.
\end{prop}

We prove the following estimations.
\begin{lem} \label{lga}
Let $a >0$. Then
\begin{equation*} 
|\int_{\Gamma} e^{zt} (a+|z|)^{-3} \ dz| \leq Ct^{2}(1+ta)^{-3}
\end{equation*}
\end{lem}
\begin{proof}
We analyze the integral on each part of the curve $\Gamma$. Let $\Gamma_{1}(r)=re^{i\varphi_{0}}$ for $r \geq t^{-1}$. Then,
\begin{align*} 
|\int_{\Gamma_{1}} e^{zt} (a+|z|)^{-3} \ dz| 
&= |\int_{t^{-1}}^{\infty} e^{tre^{i\varphi_{0}}}e^{i\varphi_{0}}(a+r)^{-3} dr|
\leq (a+t^{-1})^{-3} \int_{t^{-1}}^{\infty} e^{tr\cos(\varphi_{0})} dr
\\
&\leq {t^{2} \over (1+at)^{3}} \int_{1}^{\infty} e^{s\cos(\varphi_{0})} ds
= c_{\varphi_{0}}{t^{2} \over (1+at)^{3}}.
\end{align*}
In the similar way we prove the estimates on the curve $\Gamma_{3}(r)=re^{-i\varphi_{0}}$. 

Let $\Gamma_{2}(\varphi)=t^{-1}e^{i\varphi}$ for $-\varphi_{0} \leq \varphi \leq \varphi_{0}$. Then we have
\begin{align*}
|\int_{\Gamma_{2}} e^{zt} (a+|z|)^{-3} \ dz| 
&= |\int_{-\varphi_{0}}^{\varphi_{0}} e^{tt^{-1}e^{i\varphi}}t^{-1}ie^{i\varphi}(a+t^{-1})^{-3} d\varphi|
\\
&\leq {t^{2} \over (1+at)^{-3}} \int_{-\varphi_{0}}^{\varphi_{0}} e^{\cos(\varphi)} d\varphi
= c_{\varphi_{0}}'{t^{2} \over (1+at)^{3}}.
\end{align*}
\end{proof}


Now, we restrict our attention to the Heisenberg group.
Let $\mu_{t}$ be a convolution semigroup of measure on the Heisenberg group. The group Fourier transform
\begin{equation*}
\pi^{\lambda}_{\mu_{t}} = \int_{\mathfrak{h}} (\pi^{\lambda}_{h})^{*} \mu_{t}(dh), \qquad \lambda \neq 0.
\end{equation*}
is also well-defined for probabilistic measures. Moreover,
\begin{equation*}
|\langle \pi^{\lambda}_{\mu_{t}}f,g \rangle_{\ltrn}| \leq 
\|f\|_{\ltrn}\|g\|_{\ltrn}.
\end{equation*}
Thus, for every $\lambda \neq 0$, the operators
\begin{equation*}
T_{t}^{\lambda}f=\pi^{\lambda}_{\mu_{t}}f , \qquad f \in \lth,
\end{equation*}
are defined and $\|T_{t}^{\lambda}\| \leq 1$. It is not hard to see that 
\begin{equation*}
T_{t}^{\lambda}T_{s}^{\lambda}=T_{t+s}^{\lambda}, \qquad s,t >0, \ \lambda \neq 0.
\end{equation*}

Let $P$ be the generating functional of the semigroup $\mu_{t}$. $P$ is generalized laplacian and it can be decompose as a sum of distribution with compact support $P_{1}$ and bounded measure $\mu$. In particular, we define the operator
\begin{equation} \label{rozp}
\pi^{\lambda}_{P} = \pi^{\lambda}_{P_{1}}+\pi^{\lambda}_{\mu}.
\end{equation}

\begin{thm}
Let $P$ be a generalized laplacian. Then for all $\lambda \neq 0$ the operators $T^{\lambda}_{t}$ form a strongly continuous semigroup of operators on $L^{2}(\mathds{R}^{n})$. The infinitesimal generator $A^{\lambda}$ of semigroup $T_{t}^{\lambda}$ is the closure of the operator $\pi^{\lambda}_{P}$.
\end{thm}

The semigroup $T_{t}^{\lambda}$ and its infinitesimal generator $A^{\lambda}$ can be though as quantized version of the semigroup given by $U_{t}f=\mu_{t}*f$ and its infinitesimal generator $\mathcal{L}$. A precise description of quantizied generators one can find in Applebaum-Cohen \cite{ApC}.


\subsection{Admissible generalised laplacians on the Heisenberg group}
In our work we consider semigroups of measures which are generated by generalized laplacians satisfying some condition of admissibility.

Let us recall some aspects of theory of the Abelian groups $\rd$. The classical reference is Berg-Forst \cite{BeFo}.
Continuous \textit{negative definite functions} on $\rd$ are characterized by L{\'e}vy-Khintchine formula
\begin{equation*}
\psi(\xi) = c+ib \cdot \xi + \xi \cdot a\xi + \int_{\rd \backslash \{0\}} (1-e^{i\xi \cdot y}+y\cdot \xi \mathds{1}_{\|y\| \leq 1}) \nu d(y),
\end{equation*}
where $c \geq 0$, $b \in \rd$, $a$ is a positive definite measure $d \times d$, and $\nu$ is a measure integrating the function $\min(1,\|y\|^{2})$, i.e. L{\'e}vy measure.

It follows from theorems of Bochner and Schoenberg that $P$ is a generalized laplacian on $\rd$ iff $\psi=-\widehat{P}$ is a continuous negative definite function. 
Moreover, if $\nu_{t}$ is a convolution semigroup of measures on the Abelian group $\rd$ with generating functional $P$ and $\psi=-\widehat{P}$, then
\begin{equation*}
\widehat{\nu_{t}}(\xi) = e^{-t\psi(\xi)}.
\end{equation*}

Suppose that $P$ is a generalised laplacian on $\mathds{R}^{2n+1}$ and let $\psi=-\widehat{P}$.
Let us recall that $\rho(\xi)=(1+\|\xi\|^{2})^{1 \over 2}$). 

We say that a continuous negative definite function $\psi$ is \textit{admissible}, if
\begin{enumerate}[\upshape (i)]
\item $\psi$ is real,
\item $1+\psi$ is a weight for $\rho$ on the Heisenberg group and $\psi \leq C\rho$,
\item $|\partial^{\alpha}_{w}\partial^{\beta}_{\lambda} \psi(w,\lambda)| \leq c_{\alpha, \beta} (1+\psi(w,\lambda)) \rho(w,\lambda)^{-|\alpha|-\beta}$, \qquad $\alpha \in \mathds{N}^{2n}$, $\beta \in \mathds{N}$.
\end{enumerate}
Alternatively, we say that a generalized laplacin $P$ is \textit{admissible}, if the function $\psi=-\widehat{P}$ is admissible.

The conditions $(ii)$ and $(iii)$ guarantee in particular that $P$ is an $\mathcal{S}$-convolver on the Heisenberg group or the on groups $\mathfrak{h}_{\theta}$, $\theta \in [0,1]$. It follows from Fact \ref{f54}. The same property has the resolvent by Lemma \ref{spod}.
The condition $(i)$ implies that $P=\widetilde{P}$, i.e. $P$ is symmetric.

\begin{exa}
A negative definite function $\psi(\xi)=\log(1+\|\xi\|^{2})$ is admissible.
\end{exa}
\begin{proof}
The condition $(i)$ is of course satisfied. The condition $(ii)$ was checked in Example \ref{p54}. We estimate a partial derivative $\partial_{\xi_{j}}$ of considered function. We have
\begin{equation*}
|\partial_{\xi_{j}}\log(1+\|\xi\|^{2})| = |2\xi_{j}| (1+\|\xi\|^{2})^{-1} \leq 2(1+\|\xi\|^{2})^{-{1 \over 2}}.
\end{equation*}
In general, the condition $(iii)$ follows from Fa\`a di Bruno formula.
\end{proof}


We say that a smooth function $f:(0,\infty) \to \mathds{R}$ with continuous extension to $[0,\infty)$ is a Bernstein function if
\begin{equation*}
f \geq 0, \qquad (-1)^{k}f^{(k)} \leq 0, \qquad k \in \mathds{N}.
\end{equation*}
It is well-known that if $\psi$ is a continuous negative definite function and $u$ is a Bernstein function, then the composition $u \circ \psi$ is again a continuous negative definite function. Moreover, Bernstein functions have the following properties.
\begin{prop} \label{p82}
If $u$ is a Bernstein function, then
\begin{enumerate}[\upshape (1)] 
\item $u$ is increasing,
\item $|u^{(k)}(t)| \leq k!t^{-k}u(t)$, \qquad $k \in \mathds{N}$, $t>0$,
\item $g(t)=(1+u(t))t^{-1}$ is decreasing.
\end{enumerate}
\end{prop}

Let us notice that the function $\log(1+\|\xi\|^{2})$ has a form $u(\|\xi\|^{2})$, where $u$ is a Bernstein function $u(t)=\log(1+t)$.
In the above example the function of the form $\psi(\xi)=u(\|\xi\|)$ was smooth. In is not true, in general, consider e.g. $\psi(\xi)=\log(1+\|\xi\|)$.
We omit this problem by considering functions of the form $u(1+\|\xi\|^{2})$.

\begin{prop} \label{berprop}
Let $u$ be a Bernstein function and let $u$ satisfies the condition $u(t) \leq Ct^{1 \over 2}$, $t>0$. Then the function $a_{u}:\mathds{R}^{2n+1} \to \mathds{R}$ given by
\begin{equation*}
a_{u}(\xi) = u(1+\|\xi\|^{2}) = u \circ \rho^{2}(\xi)
\end{equation*}
is admissible negative definite function.
\end{prop}
\begin{proof}
By definition $a_{u}$ is real. By the condition $u(t) \leq Ct^{1 \over 2}$, $t>0$ we get that
$a_{u}(\xi) \leq C\rho(\xi)$. Moreover, we obtain the following estimations
\begin{equation*}
|\partial^{\alpha}_{\xi} a_{u}(\xi)| \leq c_{\alpha} a_{u}(\xi)\rho(\xi)^{-|\alpha|}, \alpha \in \mathds{N}^{2n+1}, \ \qquad \xi \in \mathds{R}^{2n+1}.
\end{equation*}
Indeed, by Fa\`a di Bruno formula we get
\begin{equation*}
\partial^{\alpha}_{\xi} (u\circ\rho^{2}(\xi)) = \sum_{j=1}^{|\alpha|} u^{(j)}(\rho^{2}(\xi)) \sum {\alpha! \over k_{1}!\ldots k_{p}!} \left({(\partial^{\beta_{1}}\rho^{2})(\xi) \over \beta_{1}!}\right)^{k_{1}}\ldots\left({(\partial^{\beta_{p}}\rho^{2})(\xi) \over \beta_{p}!}\right)^{k_{p}},
\end{equation*}
where $\sum_{i=1}^{p}k_{i}\beta_{i}=\alpha$ and $\sum_{i=1}^{p}k_{i}=j$.
It follows from the property $(2)$ in Proposition \ref{p82} that
\begin{equation*}
|u^{(j)}(\rho^{2}(\xi))| \leq j!u(\rho^{2}(\xi))\rho(\xi)^{-{j}}.
\end{equation*}
By $\rho^{2} \in S(\rho^{2},\rho;\mathfrak{h}^{*})$ we get 
\begin{align*}
|\partial^{\alpha}_{\xi} a_{u}(\xi)|
&\leq c_{\alpha}
\sum_{j=1}^{|\alpha|} j! \rho(\xi)^{-2j} a_{u}(\xi) \prod (\rho(\xi)^{2-|\beta_{i}|})^{k_{i}}
\\
&\leq c_{\alpha}
\sum_{j=1}^{|\alpha|} j! a_{u}(\xi) \rho(\xi)^{-2j+\sum(2-|\beta_{i}|)k_{i}}
\leq c'_{\alpha} a_{u}(\xi)\rho(\xi)^{-|\alpha|}.
\end{align*}

It is enough to show that $1+a_{u}$ is a weight for the weight function $\rho$ on $\mathfrak{h}^{*}$.
We check slowness. If $\|w\| \leq \|v\|$, then
\begin{equation*}
{1+a_{u}(w,\lambda) \over 1+a_{u}(v,\lambda)} \leq C,
\end{equation*}
as Bernstein functions are increasing.
If $\|u\| \leq \|v\|$, then, by condition $(3)$ in Proposition \ref{p82} we get that
\begin{equation*}
{1+a_{u}(w,\lambda) \over 1+a_{u}(v,\lambda)} = {1+u(1+\|w\|^{2}+|\lambda|^{2}) \over 1+u(1+\|v\|^{2}+|\lambda|^{2})}
\leq {1+\|w\|^{2}+|\lambda|^{2} \over 1+\|v\|^{2}+|\lambda|^{2}},
\end{equation*}
and condition of slowness of the function $\rho^{2}$ is satisfied.
In the similar way we get temperance.
\end{proof}

In the above way one can get the following admissible negative definite functions: $\log(2+\|\xi\|^{2})$, $(1+\|\xi\|^{2})^{\delta}$, $\delta \in (0,{1 \over 2}]$, $(1+\|\xi\|^{2}+m^{2})^{1 \over 2}-m$ for $m \geq 0$.


\subsection{The Main Theorem}
We consider a generalized laplacian $P$. Let $-\widehat{P}=\psi$. The distribution $P$ is the generating functional of the semigroup of measures $\mu_{t}$ on the Heisenberg group and semigroup of measures $\nu_{t}$ on the Abelian group $\mathds{R}^{2n+1}$. Let $U_{t}$ and $V_{t}$ be associated strongly continuous semigroups of operators
\begin{equation*}
U_{t}f=\mu_{t}*f, \qquad V_{t}f=\nu_{t}*_{0}f, \qquad f \in C_{0}(\mathds{R}^{2n+1}).
\end{equation*}
The semigroups cutting to $C_{c}^{\infty}(\mathds{R}^{2n+1})$ can be extended to semigroups of contractions on $L^{2}(\mathds{R}^{2n+1})$ and will be denoted the same.
The infinitesimal generators (perhaps unbounded) $\mathcal{A}$ and $\mathcal{A}^{0}$ (with domains) of semigroups of operators $U_{t}$ and $V_{t}$ satisfies
\begin{equation*}
\mathcal{A}f = \Op(P)f = P*f, \qquad \mathcal{A}^{0}f = \Op^{0}(P)f = P*_{0}f, \qquad f \in C_{c}^{\infty}(\mathds{R}^{2n+1}).
\end{equation*}
Symmetricity $P=\widetilde{P}$ implies that the operators $\Op(P)$ and $\Op^{0}(P)$ (as well as the semigroups $U_{t}$ and $V_{t}$) are selfadjoint on $L^{2}(\mathds{R}^{2n+1})$. Consequently, one can extend the semigroups $U_{t}$ and $V_{t}$ to analytic semigroups.
Admissibility of $P$ also implies that $P$ is an $\sco$-convolver. 

Notice that distribution $-\delta_{0}$ is a generalized laplacian and $\widehat{\delta_{0}}=1$. Its semigroups of measures has densities $e^{-t}$ on the Abelian group or the Heisenberg group.
For our convenience we will consider the distribution $A=P-\delta_{0}$. $A$ is a generalized laplacian and its Fourier transform is given by $m=1+\psi$. $A$ is the generating functional of semigroup of measures $e^{-t}\mu_{t}$ on the Heisenberg group and $e^{-t}\nu_{t}$ on the Abelian group.
By admissibility of $P$ we have that $A \in S(m,\rho;\mathfrak{h})$. Moreover, $m$ is elliptic weight for $\rho$ and $A$ is maximal operator.

The operators $\Op(A)$ and $\Op^{0}(A)$ are selfadjoint with the domain $\mathcal{D}=H(m,g;\mathfrak{h})$.
Their resolvent sets contain some sector
$\Sigma_{\psi}$ and we will restrict to them. Let $B_{z}=(zI-A)^{-1}$ be the resolvent of the operator $\Op(A)$ and $B^{0}_{z}$ the resolvent of the operator $\Op^{0}(A)$.
Also let,
\begin{equation*}
b_{z}=\widehat{B}_{z}, \qquad b_{z}^{0}=\widehat{B^{0}}_{z}=(z-a)^{-1}.
\end{equation*}
By Proposition \ref{rzlr} we get.
\begin{cor} \label{finwni} 
The symbol $b_{z}$ is in the class $\smzigh$ with seminorms independent from $z$.
Moreover,
\begin{equation*}
b_{z}=b_{z}^{0}+H_{z},
\end{equation*}
where $H_{z} \in S({m \over (m+|z|)^{2}}\Lambda^{2} \rho^{-4},g;\mathfrak{h}^{*})$ uniformly with respect to $z$.
\end{cor}


 
\begin{thm} \label{finprop}
Suppose that $P$ is an admissible generalized laplacian and let $\mu_{t}$ and $\nu_{t}$ be the semigroups generating by $P$. Let $m=-\widehat{P}+1$. Then the function
\begin{equation*} 
\widehat{\mu_{t}} - \widehat{\nu_{t}} = h_{t}
\end{equation*}
is smooth and satisfies
\begin{equation*} 
|\partial^{\alpha} h_{t}(w,\lambda)| \leq c_{\alpha} {t^{2}e^{t} \over (1+tm(w,\lambda))^{3}}|\lambda|^{2}\rho(w,\lambda)^{-4-|\alpha|}, \qquad \alpha \in \mathds{N}^{2n+1}, t>0.
\end{equation*}
\end{thm}
\begin{proof}
Let $A=P-\delta_{0}$. Then $e^{-t}\mu_{t}$ and $e^{-t}\nu_{t}$ are the semigroups generating by $A$ on the Heisenberg group and the Abelian group, respectively.
Let us use the notion as in Corollary \ref{finwni}. By Proposition \ref{classy} we get
\begin{equation*} 
e^{-t}\widehat{\mu_{t}} = \int_{\Gamma} e^{zt} b_{z} \ dz, \qquad e^{-t}\widehat{\nu_{t}} = \int_{\Gamma} e^{zt} b_{z}^{0} \ dz, \qquad z \in \Sigma_{{\pi \over 2}+\varphi}.
\end{equation*}
It follows from Corollary \ref{finwni} the following decomposition of the symbol $b_{z}$
\begin{equation*}
b_{z}=b_{z}^{0}+H_{z},
\end{equation*}
where $H_{z} \in S({m^{2} \over (m+|z|)^{3}}\Lambda^{2}\rho^{-4},\rho;\mathfrak{h}^{*})$. Thus, 
\begin{equation*} 
e^{-t}\widehat{\mu_{t}} = \int_{\Gamma} e^{zt} b_{z} \ dz,
= \int_{\Gamma} e^{zt} b_{z}^{0} \ dz 
+\int_{\Gamma} e^{zt} H_{z}  dz
= e^{-t}\widehat{\nu_{t}}
+ \int_{\Gamma} e^{zt} H_{z} dz.
\end{equation*}
By Lemma \ref{lga} we have the estimates
\begin{equation*} 
|\int_{\Gamma} e^{zt} (m(w,\lambda)+|z|)^{-3} \ dz| \leq Ct^{2}(1+tm(w,\lambda))^{-3}.
\end{equation*}
Thus,
\begin{equation*}
|\partial^{\alpha} h_{t}(w,\lambda)|=|\partial^{\alpha}e^{t}\int_{\Gamma} e^{zt}H_{z}(w,\lambda)| \leq c_{\alpha} {t^{2}e^{t} \over (1+tm(w,\lambda))^{3}}|\lambda|^{2}g(w,\lambda)^{-4-|\alpha|}, \qquad \alpha \in \mathds{N}^{2n+1}.
\end{equation*}
\end{proof}

\begin{cor}
Suppose that $P$ is an admissible generalized laplacian. Let $\mu_{t}$ and $\nu_{t}$ be the semigroups generated by $P$. Then,
\begin{equation*} 
|\partial^{\alpha}(\widehat{\mu_{t}} - \widehat{\nu_{t}})(\xi)| \leq c_{\alpha} \min(t^{2},t^{-1})e^{t} \min(\|\xi\|^{2-|\alpha|},\|\xi\|^{-2-|\alpha|}), \qquad \alpha \in \mathds{N}^{2n+1}, \ t>0, \ \xi \neq 0.
\end{equation*}
\end{cor}
\begin{proof}
It is not hard to see that
\begin{equation*} 
{t^{2} \over (1+tm(\xi))^{3}} \leq \min(t^{2},t^{-1}), \qquad t>0, \xi \in \mathds{R}^{2n+1},
\end{equation*}
which follows from $m \geq 1$.
We also have $|\xi_{2n+1}| \leq \|\xi\|$ and $\rho(\xi) \geq 1$ and $\rho(\xi) \geq \|\xi\|$. Thus,
\begin{equation*}
|\xi_{2n+1}|^{2}\rho(\xi)^{-4-|\alpha|} \leq \|\xi\|^{2-|\alpha|}.
\end{equation*}
On the other hand $|\xi_{2n+1}| \leq \rho(\xi)$ and then
\begin{equation*}
|\xi_{2n+1}|^{2}\rho(\xi)^{-4-|\alpha|} \leq \|\xi\|^{-2-|\alpha|}.
\end{equation*}
\end{proof}

\begin{thm} \label{mt}
Suppose that $P$ is an admissible generalized laplacian. Let $\mu_{t}$ and $\nu_{t}$ be semigroups of measures generated by $P$ on the Heisenberg group and the Abelian group $\mathds{R}^{2n+1}$, respectively. Then the diffeence of the measures $\mu_{t}$ and $\nu_{t}$, denoted by $k_{t}$, agrees with a smooth function outside zero and for all $N \in \mathds{N}$ and all $\alpha \in \mathds{N}^{2n+1}$ it satisfies
$$|\partial^{\alpha} k_{t}(x)| \leq \begin{cases}
 c_{n,\alpha} \min(t^{2},t^{-1})e^{t}\|x\|^{-(2n+1)+2-|\alpha|} & \mbox{dla } \|x\| \leq 1,\cr
 c_{n,\alpha,N} \min(t^{2},t^{-1})e^{t} \|x\|^{-N} & \mbox{dla } \|x\| \geq 1. \end{cases}$$
\end{thm}
\begin{proof}
By the Fourier transform and Theorem \ref{finprop} we obtain
\begin{equation*} 
\mu_{t} - \nu_{t} = h_{t}^{\vee}=k_{t}.
\end{equation*}
The estimates
\begin{equation*} 
|\partial^{\alpha}h_{t}(\xi)| \leq c_{\alpha}' \min(t^{2},t^{-1})e^{t} \|\xi\|^{-2-|\alpha|}, \qquad \alpha \in \mathds{N}^{2n+1}, t>0.
\end{equation*}
implies by Corollary \ref{st0thm} that
\begin{equation*} 
|\partial^{\alpha} k_{t}(x)| \leq c_{\alpha} \min(t^{2},t^{-1})e^{t} \|x\|^{-(2n+1)+2-|\alpha|}.
\end{equation*}
On the other hand, by Theorem \ref{finprop}, the difference $h_{t}$ of the Fourier transforms of the measures $\mu_{t}$ and $\nu_{t}$ is in the class $S(\rho^{-2},\rho;\mathfrak{h}^{*})$. By the similar reasoning as in Fact \ref{f54} we get that for all $N \in \mathds{N}$ and all $\alpha \in \mathds{N}^{2n+1}$, \begin{equation*} 
|\partial^{\alpha} k_{t}(x)| \leq c_{\alpha,N} \min(t^{2},t^{-1})e^{t} \|x\|^{-N}, \qquad \|x\| \geq 1.
\end{equation*}  
\end{proof}


\subsection{Application of the main theorem} \label{sec9}



Directly from Theorem \ref{mt} we get that the funcion $k_{t}$ (the difference of the semigroups of measures) belongs to $L^{p}$ iff ${2p \over p-1} > 2n+1$. In particular we obtain the following corollary.
\begin{cor} \label{wmt}
Suppose that $P$ is an admissible generalized laplacian and let $\mu_{t}$ and $\nu_{t}$ be the semigroups of measures on the Heisenberg group and the Abelian group, respectively, generated by $P$. Then, the condition ${2p \over p-1} > 2n+1$ implies that the measures $\mu_{t}$ belongs to $L^{p}$ iff $\nu_{t}$ belongs to $L^{p}$. In particular,
\begin{itemize}
\item the measures $\mu_{t}$ have densities in $L^{1}$ iff the measures $\nu_{t}$ have densities in $L^{1}$,
\item for $n=1$, i.e. $\mathfrak{h} \cong \mathds{R}^{3}$, the measures $\mu_{t}$ are in $L^{2}$ iff the measures $\nu_{t}$ are in $L^{2}$.
\end{itemize}
\end{cor}

%

Let us recall that the function given by
\begin{eqnarray} \label{fuka}
K_{u}(v) = {1 \over 2} \int_{0}^{\infty} s^{-u-1} e^{-{v \over 2} (s + s^{-1})} \ ds, \qquad u \in \mathds{R}, \ v >0,
\end{eqnarray}
is the modified Bessel function of the second kind (or McDonald function).

Now, we consider two examples of semigroup of measures. We apply Theorem \ref{mt} to get pointwise estimates for their densities.

Let $\gamma_{t}$ be Gamma distribution. 
Then, the measures with densities $\breve{\gamma_{t}}=\gamma_{t}*_{0}\widetilde{\gamma_{t}}$ form a semigroup with respect to the convolution $*_{0}$ which is called symmetric gamma distribution or gamma-variance semigroup. Its generating functional is the distribution
\begin{equation*}
\langle \breve{\Gamma}, f \rangle = \int_{\mathds{R} \backslash \{0\}} {f(x)-f(0) \over |x|} e^{-|x|} \ dx, \qquad f \in \mathcal{S}(\mathds{R}).
\end{equation*}
Its Fourier transform is given by $-\log(1+|\xi|^2)$, $\xi \in \mathds{R}$.
Its $d$-dimensional analogue is the generalized laplacian
\begin{equation*}
\langle \Gamma, f \rangle = c_{d} \int_{\rd \backslash \{0\}} {f(x)-f(0) \over \|x\|^{{d \over 2}}} K_{{d \over 2}}(\|x\|) dx, \qquad f \in \mathcal{S}(\rd).
\end{equation*}
The Fourier transform is given by
\begin{equation*}
\varphi(\xi)=-\log(1+\|\xi\|^2), \qquad \xi \in \rd.
\end{equation*}
By the form of $\widehat{v_{t}}$
\begin{equation*}
\widehat{v_{t}}(\xi)=e^{-t\log(1+\|\xi\|^{2})}=(1+\|\xi\|^{2})^{-t}
\end{equation*}
we get that the densities of the semigroup generated by the functional $\Gamma$ are given by
\begin{equation} \label{gaden}
v_{t}(x)=c_{t,d}\|x\|^{t-{d \over 2}}K_{t-{d \over 2}}(\|x\|).
\end{equation}
In particular, if $\|x\| \to \infty$, then
\begin{equation*}
p_{t}(x) \asymp c_{t}\|x\|^{t-1}e^{-\|x\|}.
\end{equation*}
The formula (\ref{gaden}) aldo implies that for $0<t<{2n+1 \over 2}$ and $\|x\| \to 0$ we have
\begin{equation*}
p_{t}(x) \asymp c_{t}\|x\|^{2t-(2n+1)}.
\end{equation*}


Let us consider the semigroup of measures on the Heisenberg group $\mathfrak{h}$ with the functional
\begin{equation*}
\langle \Gamma, f \rangle = \lim_{\epsilon \to 0} \int_{\|x\| \geq \epsilon} {f(x)-f(0) \over \|x\|^{{2n+1 \over 2}}} K_{{2n+1 \over 2}}(\|x\|) \ dx.
\end{equation*}
We checked in Example \ref{p54} that it is an admissible generalized laplacian.

\begin{cor} \label{fincor}
The densities of the semigroup of measures on the Heisenberg group with the generating functional $\Gamma$ satisfy
\begin{equation*}
q_{t}(x) \leq ct\|x\|^{2t-(2n+1)}, \qquad t<1, \|x\| \leq 1.
\end{equation*}
Moreover, we have the following asymptotic behavior
\begin{equation*}
q_{t}(x) \asymp t\|x\|^{2t-(2n+1)}, \qquad t<1, \ \|x\| \to 0.
\end{equation*}
\end{cor}
\begin{proof}
By (\ref{gaden}),
\begin{equation*}
p_{t}(x) \leq ct\|x\|^{2t-(2n+1)}, \qquad t<1, \ \|x\| \leq 1.
\end{equation*}
Moreover, we have
\begin{equation*}
p_{t}(x) \asymp t\|x\|^{2t-(2n+1)}, \qquad \|x\| \to 0, \ t \to 0.
\end{equation*}

By Theorem \ref{mt} we get that for $\|x\| \leq 1$ and $t \leq 1$ we have
\begin{equation*} 
|(p_{t}-q_{t})(x)| \leq c t^{2}\|x\|^{-(2n+1)+2}.
\end{equation*}
Consequently, for small time $t$ we obtain the estimates in the case $\|x\| \leq 1$ or asymptotic behavior.
\end{proof}


For $\alpha \in (0,2)$ and $m>0$ we consider the Bernstein functions $u(t)=(m^{2 \over \alpha}+t)^{\alpha \over 2}-m$. Then, the negative definite functions 
$(m^{2 \over \alpha}+\|\xi\|^{2})^{\alpha \over 2}-m$ provide to relativistic $\alpha$-stable semigroups, whose Fourier transforms satisfy
\begin{equation*}
\widehat{\rho_{t}^{\alpha}}(\xi) = e^{-t((m^{2 \over \alpha}+\|\xi\|^{2})^{\alpha \over 2}-m)}.
\end{equation*}
It is known (see e.g. Kulczycki-Siudeja \cite{KuS}) that the densities of the semigroup satisfy for some constants $c_{1}$, $c_{2}$,
\begin{equation*}
\rho_{t}^{\alpha}(x) \leq c_{1}e^{tm}\min(t\|x\|^{-d-\alpha}e^{-c_{2}\|x\|},t^{-{d \over \alpha}}).
\end{equation*}
In particular, for $\|x\| \leq 1$ we have
\begin{equation*}
\rho_{t}^{\alpha}(x) \leq c t\|x\|^{-d-\alpha}, \qquad t \to 0.
\end{equation*}


\begin{lem}
Let $\alpha \in (0,1]$. Then, the continuous negative definite function
\begin{equation*}
\psi(\xi)=(m^{2 \over \alpha}+\|\xi\|^{2})^{\alpha \over 2}-m,
\end{equation*}
is admissible for some $m>0$. In particular, it is admissible for $m=1$ and $\alpha \in (0,1]$ and moreover for $m$ and $\alpha$ satisfy $({\alpha \over 2})^{\alpha \over 2}< m <1$. 
\end{lem}
\begin{proof}
It is not hard to check the case $m=1$. We focus on the case $({\alpha \over 2})^{\alpha \over 2}< m <1$.

The behavior of the derivatives we get in the similar way as in Proposition \ref{berprop}. We check that $\psi$ is a weight for $\rho$.
For $\alpha \in (0,1]$ we have $\psi(\xi) \leq \rho(\xi)$ (for $\alpha \in (1,2]$ this condition is not satisfied). Let us notice that the function $\psi$ is increasing. We show that $1+\psi(\xi) \over 1+\|\xi\|^{2}$ is decreasing, what will be the end. As $\psi(\xi)=u(\|\xi\|^{2})$, we check that ${1+u(t) \over 1+t}$ is decreasing. For $m \leq 1$ we have
\begin{align*}
\left({1+u(t) \over 1+t}\right)'
&={{\alpha \over 2}(m^{2 \over \alpha}+t)^{{\alpha \over 2}-1}(1+t)-(1+(m^{2 \over \alpha}+t)^{\alpha \over 2}-m) \over (1+t)^{2}}
\\
&\leq {{\alpha \over 2}(1+t)-(1-m)(m^{2 \over \alpha}+t)^{{\alpha \over 2}-1}-(m^{2 \over \alpha}+t) \over (m^{2 \over \alpha}+t)^{1-{\alpha \over 2}}(1+t)^{2}}
\leq {{\alpha \over 2}-m^{2 \over \alpha} \over (m^{2 \over \alpha}+t)^{1-{\alpha \over 2}}(1+t)^{2}},
\end{align*}
and thus, if in addition $({\alpha \over 2})^{\alpha \over 2}<m$, then ${\alpha \over 2} < m^{{2 \over \alpha}}$ and the above derivative is negative.
\end{proof}


Let $v_{t}$ be the densities of the convolution semigroup of measures with the associated negative definite function of the form $(m^{2 \over \alpha}+\|\xi\|^{2})^{\alpha \over 2}-m$.
By Theorem \ref{mt} we get that for $\|x\| \leq 1$ and $t \leq 1$ we have
\begin{equation*} 
|(\rho_{t}-v_{t})(x)| \leq c t^{2}\|x\|^{-(2n+1)+2}.
\end{equation*}
In the similar way as in Corollary \ref{fincor} we get the estimates on the Heisenberg group.
\begin{cor}
Let $v_{t}$ be the densities of the convolution semigroup of measures with the associated negative definite function of the form $(m^{2 \over \alpha}+\|\xi\|^{2})^{\alpha \over 2}-m$. Then,
\begin{equation*}
v_{t}(x) \leq ct \|x\|^{-(2n+1)-\alpha}, \qquad t<1, \ \|x\| \leq 1.
\end{equation*}
\end{cor}



\subsection*{Acknowledgements.}
The author is grateful to P. G{\l}owacki for inspiring conversations on the subject of the present note and his useful suggestions. He also thanks M. Mirek for critical reading the manuscript and M. Preisner for his helpful remarks.

\end{document}